\newcommand{\df}{{\rm d}}
\newcommand{\R}{\mathds R}
\newcommand{\N}{\mathds N}
  \def\br#1\er{\textcolor{red}{#1}} %
\title[Almost isometries of non-reversible metrics]{Almost isometries of non-reversible metrics with applications to stationary spacetimes}
\author[M. A. Javaloyes]{Miguel Angel Javaloyes}
\address{Departamento de Matem\'aticas, \hfill\break\indent
Universidad de Murcia, \hfill\break\indent
Campus de Espinardo,\hfill\break\indent
30100 Espinardo, Murcia, Spain}
\email{majava@um.es}
\author[L. Lichtenfelz]{Leandro Lichtenfelz}
\address{Departamento de Matem\'atica,\hfill\break\indent
Universidade de S\~ao Paulo, \hfill\break\indent Rua do Mat\~ao
1010,\hfill\break\indent CEP 05508-900, S\~ao Paulo, SP, Brazil}
\email{silverratio@gmail.com}
\author[P. Piccione]{Paolo Piccione}
\address{Departamento de Matem\'atica,\hfill\break\indent
Universidade de S\~ao Paulo, \hfill\break\indent Rua do Mat\~ao
1010,\hfill\break\indent CEP 05508-900, S\~ao Paulo, SP, Brazil}
\email{piccione.p@gmail.com}
\date{17.05.2012}
\thanks{MAJ was partially supported by MINECO (Ministerio de Econom\'{\i}a y Competitividad) project MTM2012-34037, Regional J. Andaluc\'{\i}a Grant P09-FQM-4496 and Fundaci\'{o}n S\'{e}neca project 04540/GERM/06, Spain. This research is a
result of the activity developed within the framework of the Programme in Support of Excellence Groups of the Regi\'{o}n de Murcia, Spain, by Fundaci\'{o}n S\'{e}neca, Regional Agency for Science and Technology (Regional Plan for
Science and Technology 2007-2010).
   LL is sponsored by Fapesp, Brazil. PP is partially sponsored by CNPq and Fapesp, Brazil.}
\subjclass[2010]{Primary 53C22, 53C50, 53C60, 58B20}
\keywords{Finsler and Randers metrics, geodesics, isometries,
quasi-metrics, stationary spacetimes.}
  \def\br#1\er{\textcolor{red}{#1}} %
\begin{document}

\newtheorem{thm}{Theorem}[section]
\newtheorem{prop}[thm]{Proposition}
\newtheorem{lemma}[thm]{Lemma}
\newtheorem{cor}[thm]{Corollary}
\theoremstyle{definition}
\newtheorem{defi}[thm]{Definition}
\newtheorem{notation}[thm]{Notation}
\newtheorem{exe}[thm]{Example}
\newtheorem{conj}[thm]{Conjecture}
\newtheorem{prob}[thm]{Problem}
\newtheorem{rem}[thm]{Remark}

\begin{abstract}
We develop the basics of a theory of almost isometries for spaces endowed with a quasi-metric. The case of non-reversible Finsler (more specifically, Randers) metrics is of particular
interest, and it is studied in more detail. The main motivation arises from General Relativity,
and more specifically in spacetimes endowed with a timelike conformal field $K$,  in which case \emph{conformal
diffeomorphisms} correspond to almost isometries of the Fermat metrics defined in the spatial part.
A series of results on the topology and the Lie group structure of conformal maps are discussed.
\end{abstract}

\maketitle
\section{Introduction}
%Asymmetric distances have been largely ignored after the great interest that awakened in the first half of the XXth century \cite{Bus44,Zau}
A quasi-metric is just a metric (in the context of metric spaces) without the restriction of symmetry.
In the last years, an increasing interest in quasi-metrics has risen  \cite{BhDa11,CRZ09,ColZim,KPQ10,MaVa10,Men11,Pl09,SG10}.
%There are some concepts that appear only when considering asymmetric distances, since in the symmetric case, they reduce to known ones
This interest can be justified by the amount of applications, since non-reversibility is present in many situations.

The first systematic study of quasi-metrics was carried out by W. A. Wilson in \cite{Wil31}. Later, also H. Busemann developed some results on
 metrics without the restriction of symmetry \cite{Bus44}, but with the additional condition that forward and backward balls generate the same topology (see Remark~\ref{generalized}).
Most of the results by H. Busemann were developed for (symmetric) metric spaces and it was E. M. Zaustinsky who extended some of them to quasi-metric spaces \cite{Zau}.

Our main goal is to study the automorphisms of a quasi-metric space $(X,d)$ that preserve what we call the triangular function, namely, the quantity $T(x,y,z)=d(x,y)+d(y,z)-d(x,z)$. This quantity measures how far the points $x,y,z$ are from achieving the equality in the triangle inequality. Maps that preserve the triangular function will be
called \emph{almost isometries} and it is an immediate consequence of the very definition that such maps preserve minimizing geodesics (see Corollary~\ref{pregeo}). It is easy to see that when symmetry holds, almost isometries are in fact isometries. Therefore,
this notion is relevant only in the non-symmetric context.

One of the fundamental examples of quasi-metrics is given by the distance associated to a non-reversible Finsler metric. As in the case of isometries, the almost isometries can be described in terms of the pullback of the Finsler metric. More precisely, they are the maps that preserve the Finsler metric up to an exact one-form, namely, the pullback of  a Finsler metric is the sum of the same Finsler metric and the differential of a smooth function. One of the simplest examples of non-reversible Finsler metrics are
Randers metrics, which are defined as the sum of the square root of a Riemannian metric and a one-form having norm less than one at every point. Recently, a relation between Randers metrics and standard stationary spacetimes has been explored
 \cite{BiJa11,CGS11,CJM10,CJM10b,CJM11,CJS11,DPS11,FHS10b,GGP09,GHWW09}. The core of this relation lies in the fact that, as a consequence of the Fermat principle, lightlike geodesics project up to parametrization into geodesics of a Randers metric that we call Fermat metric. Then, studying existence and multiplicity of lightlike geodesics between an event and a vertical line is equivalent to studying existence and multiplicity of geodesics of the Fermat metric \cite{BiJa11,CJM10,CJM10b,CJM11}. Moreover, causality of the stationary spacetime can be characterized in terms of completeness properties of the Fermat metric \cite{CJS11} and its causal boundary can be described in terms of the topological boundary of the Fermat metric \cite{FHS10b}. Giving continuity to the fruitful interplay between Randers metrics and stationary spacetimes, we will explore this relation in the level of transformation groups. In fact, we will relate the conformal maps of the conformastationary spacetime that preserve the timelike Killing vector field with the almost isometries of the Fermat metric obtaining, as a consequence of this interplay, results of genericity and compactness for the $K$-conformal group. This relation was the departing point and the inspiration to get to the concept of almost isometry.

Let us describe in detail the results of this paper. In Section~\ref{sec:basicsgendist}, we introduce the notions of triangular function of a quasi-metric and almost isometry (Definition~\ref{almostiso}), the latter being a map that preserves the triangular function. Then we show in Proposition~\ref{defequiv} that the definition of almost isometry $\varphi$ of $(X,d)$ is equivalent to the existence of a real function $f:X\rightarrow \R$ such that
\[d(\varphi(x),\varphi(y))=d(x,y)+f\big(\varphi(y)\big)-f\big(\varphi(x)\big)\]
for every $x,y\in X$. Moreover, we show that $\varphi$ is always a homeomorphism and $f$ is continuous (Lemma \ref{lem:homeo}). Finally we show that the extended isometry group $\widetilde{\rm Iso}(X,d)$, made up by the almost isometries of $(X,d)$, is contained in the isometry group of the symmetrized metric in \eqref{distances} and it is a topological group (Proposition~\ref{fisometriesLie}). In Subsection~\ref{subsec:localisom}, we study local almost isometries and we conclude in Theorem~\ref{thm:propsimpcon} that a local almost isometry between two length spaces with weakly finitely compact domain and simply connected codomain must be a global almost isometry. A counterexample to this result in the case that the quasi-metric spaces are not length spaces
is provided in Remark~\ref{counterex}.

In Section~\ref{finslermetrics} we prove that an almost isometry of a Finsler manifold is an isometry of the symmetrized Finsler metric in \eqref{symmetrized} and then it is smooth (Lemma \ref{thm:diffalmostisomFinsler}). Moreover, the function $f:M\rightarrow\R$ of Proposition~\ref{defequiv} is smooth and $\varphi$ is an almost isometry for $F$ if and only if $\varphi_*(F)=F-{\rm d}f$ (Proposition~\ref{diff_fisometries}).  Finally, the extended isometry group
  of $(S,F)$, denoted by $\widetilde{\rm Iso}(S,F)$, is a closed subgroup of the isometry group of the symmetrized Finsler metric $\hat{F}$, which is a Lie group (see for instance \cite{DengHou02}) and then $\widetilde{\rm Iso}(S,F)$ is also a Lie group (Proposition~\ref{isoextended}).

 In Section \ref{stationary} we first introduce $K$-conformastationary decompositions $(M=S\times\R,g^K)$ of conformastationary spacetimes endowed  with a complete timelike conformal field $K$  in \eqref{e1} and the Fermat metric $F^K$ associated to them \eqref{fermatmetric}. In Theorem~\ref{fundamental} we show that a conformal map $\psi:(M,g)\rightarrow (M,g)$  determines an almost isometry $\varphi:(S,F^K)\rightarrow (S,F^W)$ of the Fermat metrics $F^K$ and $F^W$ associated to one of the conformastationary decompositions determined respectively by $K$ and $W=\psi_*(K)$. In particular $\varphi$ is given by the spatial component of the conformal map  $\psi$. Then we define a $K$-conformal map as a map that is conformal and preserve the timelike conformal vector field $K$.  In Proposition \ref{bijection} we show that an almost isometry determines a $K$-conformal map up to a composition with an element of the closed subgroup generated by $K$, which will be denoted by ${\mathcal K}$. Indeed, there is a Lie group homomorphism between the $K$-conformal maps quotiented by ${\mathcal K}$ and the extended isometry group. Furthermore, in Corollaries~\ref{genericity} and \ref{compactLiegroup}, we use the former Lie group homomorphism to obtain a genericity result for standard stationary spacetimes with discrete $K$-conformal group and the compactness of the $K$-conformal group.

In the last subsection of Section~\ref{stationary}, we obtain some consequences for the conformal group ${\rm Conf}(M,g)$. In particular in Corollary~\ref{compactconf} we give a characterization of the compactness of ${\rm Conf}(M,g)/{\mathcal K}$. Finally in Theorem~\ref{finalTh} we collect the one-to-one relation between conformal maps and almost isometries of the Fermat metrics up to composition with elements of ${\mathcal K}$.

%As we will see later, stationary conformal maps are closely related with a class of maps of Randers manifolds that preserve
%the distance up to a differentiable function.
\section{Quasi-metrics and almost isometries}\label{sec:basicsgendist}
Let us first of all introduce the concept of a quasi-metric (see \cite{Wil31}).
\begin{defi}\label{symmetricdist}
Given a set $X$, we say that a function $d:X\times X\rightarrow \R$ is a {\em quasi-metric} if
\begin{enumerate}
\item[(i)] $d(x,y)\geq 0$ for every $x,y\in X$ and $d(x,y)=0$ if and only if $x=y$,
\item[(ii)] $d(x,y)+d(y,z)\geq d(x,z)$ (triangle inequality).
%\item[(iii)] Given a sequence $\{x_n\}_{n\in\N}\subset X$ and $x\in X$, then $\lim_{n\rightarrow \infty}d(x_n,x)=0$ if
%and only if $\lim_{n\rightarrow \infty}d(x,x_n)=0$.
\end{enumerate}
\end{defi}
As a consequence of the lack of symmetry, there are two kinds of balls, namely, \emph{forward and backward balls},
defined by $B_d^+(x,r)=\{y\in X: d(x,y)<r\}$ and
$B_d^-(x,r)=\{y\in X: d(y,x)<r\}$ respectively, for $x\in X$ and $r>0$. Both families generate two topologies that we will call respectively forward and backward topologies. %By (iii), both families of balls generate the same topology, and
%the distance $d$ is continuous in this topology.

A pair $(X,d)$, where $d$ is a quasi-metric on the set $X$, will be called a \emph{quasi-metric space} %\cite[page 5]{Zau},
and it will always be assumed to be endowed with the topology induced by the family
$B_d^+(x,r)\cap B_d^-(x,r)$, $x\in M$ and $r>0$, which is finer than the forward and the backward topologies. Let us observe that this topology coincides with the topology generated by (the balls of) the symmetrized metric
\begin{equation}\label{distances}
\widetilde{d}(x,y)=\tfrac 12\big(d(x,y)+d(y,x)\big).
\end{equation}
In fact, given $\{x_n\}_{n\in\N}$ in $X$, $x_n\rightarrow x$ in $(X,d)$ if and only if $\lim_{n\rightarrow\infty}d(x_n,x)=\lim_{n\rightarrow\infty}d(x,x_n)=0$.  For that reason we will refer to this topology as ``the symmetric topology'' associated to the
quasi-metric.
%\begin{lemma}\label{symdist}
% Given a general metric space $(X,d)$, we can define two   (symmetric) distances  $\widetilde{d}$ and $\bar{d}$ on $X$ as
%\begin{align}\label{distances}
%\widetilde{d}(x,y)&=d(x,y)+d(y,x)&\text{and}&&\overline{d}(x,y)=\sqrt{d(x,y)^2+d(y,x)^2}
%\end{align}
%for $x,y\in X$.  Moreover, the topology induced by the balls of $\widetilde{d}$ and $\bar{d}$ coincides with the topology generated by the balls of $d$.
%\end{lemma}
%\begin{proof}
%The only non-trivial fact to see that $\widetilde{d}$ and $\bar{d}$ are (standard) distances is that $\bar{d}$ satisfies  the triangle inequality. Given $x,y,z\in X$, then
%\begin{align*}\bar{d}(x,z)&=\sqrt{d(x,z)^2+d(z,x)^2}\leq
%\sqrt{(d(x,y)+d(y,z))^2+(d(z,y)+d(y,x))^2}\\&=\|(d(x,y),d(y,x))+(d(y,z),d(z,y))\|\\&\leq \|(d(x,y),d(y,x))\|+\|(d(y,z),d(z,y))\|\\
%&=\bar{d}(x,y)+\bar{d}(y,z),
%\end{align*}
%where $\|\cdot\|$ is the Euclidean norm on $\R^2$ and we have used
%the triangle inequality of $d$ in the first inequality and the triangle inequality of $\|\cdot\|$ in the second one.
%The assertions about the topologies are a consequence of part (iii) of Definition \ref{symmetricdist}.
%\end{proof}

\begin{rem}\label{generalized}
Let us observe that the notion of general metric space given in \cite[Section 1]{Bus44} and \cite[page 5]{Zau}) is a particular case of quasi-metric space. Indeed, a general metric space is a quasi-metric space satisfying the condition
\begin{itemize}
\item[(GMS)]{\it  Given a sequence $\{x_n\}_{n\in\N}\subset X$ and $x\in X$, then $\lim_{n\rightarrow \infty}d(x_n,x)=0$ if
and only if $\lim_{n\rightarrow \infty}d(x,x_n)=0$.}
\end{itemize}
This condition is equivalent to the fact that the forward and backward topologies coincide with the topology of $(X,d)$. Moreover, following \cite{Men04}, the terminology ``quasi-metric space'' is usually used for a space having a quasi-metric, but endowed with the forward topology,  whereas ``asymmetric metric space'' is used for
a quasi-metric space endowed with the ``symmetric topology''. Let us point out that we will also assume that quasi-metric space be endowed with the symmetric topology  but we will prefer to use ``quasi-metric space'' rather than ``asymmetric metric space''.
\end{rem}

In a quasi-metric space we can define the length of a continuous curve
$\alpha:[a,b]\subseteq \R\rightarrow X$ as
\begin{equation}\label{d-length}\ell(\alpha)=\sup_{\mathcal P}\sum_{1=1}^r d(\alpha(s_i),\alpha(s_{i+1})),
\end{equation}
where $\mathcal P$ is the set of partitions $a=s_1<s_2<\ldots<s_{r+1}=b$, $r\in \N$. We say that $\alpha$ is {\it rectifiable} when $\ell(\alpha)$ is finite. Moreover, we say that a curve $\gamma$ in $X$
from $p$ to $q$ is a {\it minimizing geodesic} if $\ell(\gamma)=d(p,q)$. Let us define the
{\it triangular function} $T:X\times X\times X\rightarrow\left[0,+\infty\right[$ of a quasi-metric space $(X,d)$ as
$T(x,y,z)=d(x,y)+d(y,z)-d(x,z)$ for every $x,y,z\in X$. Evidently, $T$ is continuous.
\begin{prop}\label{CharGeo}
A curve $\alpha:[a,b]\subseteq\R\rightarrow X$ is a minimizing geodesic of
a quasi-metric space $(X,d)$ if and only if $T(\alpha(s_1),\alpha(s_2),\alpha(s_3))=0$ for every $a\leq s_1<s_2<s_3\leq b$.
\end{prop}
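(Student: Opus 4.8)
The plan is to prove both implications by exploiting the relationship between the triangular function and the length functional $\ell$ in \eqref{d-length}. The key observation is that for any partition $a = s_1 < s_2 < \dots < s_{r+1} = b$, the telescoping sum $\sum_{i=1}^r d(\alpha(s_i), \alpha(s_{i+1}))$ differs from the single quantity $d(\alpha(a), \alpha(b))$ exactly by a sum of triangular-function terms, all of which are nonnegative.

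For the ``if'' direction, suppose $T(\alpha(s_1), \alpha(s_2), \alpha(s_3)) = 0$ for every $a \le s_1 < s_2 < s_3 \le b$. First I would show by induction on $r$ that for every partition $\mathcal P$, $\sum_{i=1}^r d(\alpha(s_i), \alpha(s_{i+1})) = d(\alpha(a), \alpha(b))$: the inductive step collapses $d(\alpha(s_{r-1}), \alpha(s_r)) + d(\alpha(s_r), \alpha(s_{r+1}))$ to $d(\alpha(s_{r-1}), \alpha(s_{r+1}))$ precisely because the corresponding triangular term vanishes, after which the induction hypothesis applies to the coarser partition. (One must check this carefully: collapsing from the right requires vanishing of $T(\alpha(s_{r-1}), \alpha(s_r), \alpha(s_{r+1}))$, which is among the hypotheses.) Taking the supremum over all partitions then gives $\ell(\alpha) = d(\alpha(a), \alpha(b))$, so $\alpha$ is a minimizing geodesic from $p = \alpha(a)$ to $q = \alpha(b)$.

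For the ``only if'' direction, suppose $\ell(\alpha) = d(\alpha(a), \alpha(b)) =: d(p,q)$. Fix $a \le s_1 < s_2 < s_3 \le b$. Using the triangle inequality repeatedly (over the sub-intervals $[a, s_1]$, $[s_1, s_2]$, $[s_2, s_3]$, $[s_3, b]$, each further refined if needed), one obtains for any partition refining $\{a, s_1, s_2, s_3, b\}$ that
\[
d(p,q) \;\le\; d(p, \alpha(s_1)) + d(\alpha(s_1), \alpha(s_2)) + d(\alpha(s_2), \alpha(s_3)) + d(\alpha(s_3), q) \;\le\; \ell(\alpha) = d(p,q),
\]
where the first inequality is the triangle inequality applied across the four pieces and the second is the definition of $\ell$ as a supremum over partitions (the four displayed terms form a sub-sum of one admissible partition sum, hence are $\le \ell(\alpha)$; strictly, one picks a partition whose sum is close to $\ell(\alpha)$ and containing $s_1, s_2, s_3$, but since $\ell(\alpha) = d(p,q)$ is already the minimum possible value, equality is forced). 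Hence all the inequalities are equalities; in particular $d(p, \alpha(s_1)) + d(\alpha(s_1), \alpha(s_2)) + d(\alpha(s_2), \alpha(s_3)) + d(\alpha(s_3), q) = d(p,q)$, and combining this with the triangle inequalities $d(p, \alpha(s_2)) \le d(p,\alpha(s_1)) + d(\alpha(s_1),\alpha(s_2))$ and $d(\alpha(s_2), q) \le d(\alpha(s_2),\alpha(s_3)) + d(\alpha(s_3),q)$ and $d(p,q) \le d(p,\alpha(s_2)) + d(\alpha(s_2),q)$ forces each of these to be an equality as well. From $d(p, \alpha(s_3)) = d(p,\alpha(s_1)) + d(\alpha(s_1),\alpha(s_2)) + d(\alpha(s_2),\alpha(s_3))$ together with $d(p,\alpha(s_3)) \le d(p,\alpha(s_2)) + d(\alpha(s_2),\alpha(s_3))$ and $d(p,\alpha(s_2)) \le d(p,\alpha(s_1)) + d(\alpha(s_1),\alpha(s_2))$, one extracts $T(\alpha(s_1),\alpha(s_2),\alpha(s_3)) = d(\alpha(s_1),\alpha(s_2)) + d(\alpha(s_2),\alpha(s_3)) - d(\alpha(s_1),\alpha(s_3)) = 0$, again by chasing the chain of forced equalities (now starting from $\alpha(s_1)$ rather than $p$, using the restriction of $\alpha$ to $[s_1, b]$, which is still minimizing by the same argument).

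The main obstacle I anticipate is purely bookkeeping: in the ``only if'' direction one must be careful that $\ell(\alpha) = d(p,q)$ does not merely bound individual partition sums but actually forces the three-term sub-sums to be exactly $d(p,q)$ — this works only because $d(p,q)$ is simultaneously a lower bound (triangle inequality) and the supremum; the argument is clean but needs to be stated without circularity. A minor technical point is that restricting a minimizing geodesic to a subinterval yields a minimizing geodesic, which should be recorded as a preliminary observation (it follows from additivity of $\ell$ over concatenations and the triangle inequality) and then reused to slide the base point from $p$ to $\alpha(s_1)$.
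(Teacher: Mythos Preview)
Your proposal is correct and follows essentially the same route as the paper. Both directions rest on the same two ingredients: for the ``if'' direction, the telescoping/induction argument that collapses every partition sum to $d(\alpha(a),\alpha(b))$; for the ``only if'' direction, the observation that any restriction $\alpha|_{[\tilde a,\tilde b]}$ of a minimizing geodesic is again minimizing, from which the vanishing of $T(\alpha(s_1),\alpha(s_2),\alpha(s_3))$ follows by applying the partition $\{s_1,s_2,s_3\}$ to $\alpha|_{[s_1,s_3]}$. The paper states this very tersely, whereas you spell out the intermediate chain of forced equalities; once you invoke the restriction lemma (which you correctly flag as the preliminary observation to record), most of that bookkeeping in your ``only if'' paragraph becomes unnecessary and the argument shortens to two lines.
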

\begin{proof}
First observe that the triangle inequality implies that if $\alpha$ is a minimizing geodesic, any restriction $\alpha|_{[\tilde{a},\tilde{b}]}$ is also a minimizing geodesic.
%since
%\begin{multline*}d(\alpha(s_1),\alpha(s_3))=\ell(\alpha|_{[s_1,s_3]})=\ell(\alpha|_{[s_1,s_2]})+\ell(\alpha|_{[s_2,s_3]})\\\geq d(\alpha(s_1),\alpha(s_2))+d(\alpha(s_2),\alpha(s_3)),
%\end{multline*}
%whenever $\alpha|_{[s_1,s_3]}$ is a minimizing geodesic. Therefore,
With this  observation, one proves easily that for a minimal geodesic
$\alpha:[a,b]\to X$, the quantity $T(\alpha(s_1),\alpha(s_2),\alpha(s_3))$ vanishes
for every $a\leq s_1<s_2<s_3\leq b$.

The converse follows from the definition of minimizing geodesic, since, in this case, we get easily that
\[\sum_{1=1}^r d(\alpha(s_i),\alpha(s_{i+1}))=d(\alpha(a),\alpha(b))\]
for any partition $a=s_1<s_2<\ldots<b=s_{r+1}$, $r\in \N$.
\end{proof}
\begin{defi}
We say that a quasi-metric space $(X,d)$ is {\em finitely compact} if $B_d^+(x,r)\cup B_d^-(x,r)$ is precompact for every $x\in X$ and every $r>0$ and {\em weakly finitely compact} if $B_d^+(x,r)\cap B_d^-(x,r)$ is precompact for every $x\in X$ and every $r>0$.
\end{defi}
A sequence  $\{x_n\}_{n\in\N}$ in $X$  is said to be a Cauchy sequence of $(X,d)$ if for every $\epsilon>0$ there exists $N\in\N$ such that $d(x_n,x_m)<\epsilon$ for every $n,m\ge N$. Moreover, we say that
it is a forward (resp. backward) Cauchy sequence of $(X,d)$ if for every
$\epsilon>0$ there exists $N\in\N$ such that $d(x_n,x_m)<\epsilon$ for every $N\leq n<m$ (resp. $N\leq m<n$). We say that a quasi-metric space is (forward, backward) complete if all the (forward, backward) Cauchy sequences converge.  We will say that
a quasi-metric space is complete when all the Cauchy sequences converge. Let us point out that this definition is different from the one usually used in Finsler metrics, where complete means forward and backward complete. With our definition, complete is weaker than forward or backward complete.   Let us recall in the following lemma several facts regarding completeness.
\begin{lemma}\label{impli}
Consider a quasi-metric space $(X,d)$:
\begin{enumerate}[(i)]
\item If $(X,d)$ is finitely compact, then it is weakly finitely compact.
\item If $(X,d)$ is finitely compact, then it is forward and backward complete.
\item If $(X,d)$ is weakly finitely compact, then it is complete.
\item $(X,d)$ is complete if and only if $(X,\tilde{d})$ (see \eqref{distances}) is complete.
\end{enumerate}
\end{lemma}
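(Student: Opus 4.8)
The plan is to prove the four items essentially independently; the single recurring tool is the elementary observation that a sequence contained in a precompact set has a subsequence converging in the symmetric topology, which is legitimate because, by \eqref{distances}, that topology is the metric topology of $\tilde d$, so a precompact set has sequentially compact closure. I would dispatch item (i) in one line: $B_d^+(x,r)\cap B_d^-(x,r)\subseteq B_d^+(x,r)\cup B_d^-(x,r)$, and a subset of a precompact set is precompact (its closure is a closed subset of a compact set). Item (iv) amounts to unwinding definitions: convergence in $(X,d)$ coincides with convergence in $(X,\tilde d)$ (already noted right after \eqref{distances}), and from $\tilde d(x_n,x_m)\le\max\{d(x_n,x_m),d(x_m,x_n)\}$ together with $d(x_n,x_m)\le 2\tilde d(x_n,x_m)$ a sequence is Cauchy for $(X,d)$ if and only if it is Cauchy for $(X,\tilde d)$; hence ``every Cauchy sequence converges'' says the same thing for the two spaces.

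For (ii) and (iii) the strategy is the standard Hopf--Rinow-type argument: show that a one-sided (resp.\ two-sided) Cauchy sequence is eventually trapped in one of the balls assumed precompact, extract a subsequence converging in the symmetric topology, and then upgrade subsequential convergence to convergence of the whole sequence by the triangle inequality. Concretely, for (ii) I would take a forward Cauchy sequence $\{x_n\}$, use $\epsilon=1$ to find $N$ with $x_m\in B_d^+(x_N,1)$ for all $m\ge N$, note that the sequence then lies in $\{x_1,\dots,x_{N-1}\}\cup B_d^+(x_N,1)$, which is precompact since $B_d^+(x_N,1)\subseteq B_d^+(x_N,1)\cup B_d^-(x_N,1)$ is precompact and adjoining finitely many points changes nothing; hence some $x_{n_k}\to x$, i.e.\ $d(x_{n_k},x)\to 0$ and $d(x,x_{n_k})\to 0$. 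To finish, fix $\epsilon>0$, pick $N_0$ with $d(x_a,x_b)<\epsilon/2$ for $N_0\le a<b$, and $k_0$ with $n_{k_0}\ge N_0$ and $d(x_{n_{k_0}},x),d(x,x_{n_{k_0}})<\epsilon/2$; then for $n>n_{k_0}$ one estimates $d(x,x_n)\le d(x,x_{n_{k_0}})+d(x_{n_{k_0}},x_n)<\epsilon$, and, choosing $k\ge k_0$ with $n_k>n$, $d(x_n,x)\le d(x_n,x_{n_k})+d(x_{n_k},x)<\epsilon$. So $x_n\to x$ and $(X,d)$ is forward complete; the backward case is the mirror image, with $B_d^-$ in place of $B_d^+$ and the two arguments of $d$ exchanged.

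For (iii) the same scheme goes through, and in fact more smoothly, because the two-sided Cauchy condition is symmetric in its indices: with $\epsilon=1$ the sequence is eventually in $B_d^+(x_N,1)\cap B_d^-(x_N,1)$, which is precompact by weak finite compactness, so some $x_{n_k}\to x$, and in the final step one simply uses $d(x_n,x)\le d(x_n,x_{n_{k_0}})+d(x_{n_{k_0}},x)<\epsilon$ and $d(x,x_n)\le d(x,x_{n_{k_0}})+d(x_{n_{k_0}},x_n)<\epsilon$ for $n\ge N_0$, with no attention to the ordering of $n$ and the subsequential index. Alternatively, granting (iv), it suffices to check that $(X,\tilde d)$ is complete, and since $B_{\tilde d}(x,r)\subseteq B_d^+(x,2r)\cap B_d^-(x,2r)$, weak finite compactness makes every $\tilde d$-ball precompact, whence completeness of $(X,\tilde d)$ by the classical Heine--Borel-type argument for metric spaces.

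The only place that needs genuine care is the last step of (ii): since $d$ is non-symmetric, in each of the two estimates $d(x,x_n)$ and $d(x_n,x)$ the subsequential point must be inserted on the correct side of the triangle inequality and its index chosen either above or not above $n$, so that the appropriate one-sided (forward, resp.\ backward) Cauchy bound is the one that applies. Everything else is routine precompactness bookkeeping, together with the identification of the topology of $(X,d)$ with that of the honest metric $\tilde d$.
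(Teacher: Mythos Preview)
The paper states Lemma~\ref{impli} without proof, introducing it merely as a recollection of ``several facts regarding completeness'', so there is nothing to compare against; your argument supplies exactly the routine verification the authors left to the reader, and it is correct.

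One tiny imprecision in your treatment of (ii): in the estimate $d(x_n,x)\le d(x_n,x_{n_k})+d(x_{n_k},x)$ you choose $k\ge k_0$ with $n_k>n$ and then use $d(x_{n_k},x)<\epsilon/2$, but your stated choice of $k_0$ only guarantees this for the single index $k_0$, not for all $k\ge k_0$. The fix is immediate---choose $k_0$ so that $d(x_{n_k},x)<\epsilon/2$ and $d(x,x_{n_k})<\epsilon/2$ hold for \emph{every} $k\ge k_0$, which is possible precisely because $x_{n_k}\to x$ in the symmetric topology---and you clearly have this in mind. Everything else (the containments of balls, the equivalence of Cauchy conditions in (iv), the alternative route to (iii) via $B_{\tilde d}(x,r)\subseteq B_d^+(x,2r)\cap B_d^-(x,2r)$) is spot on.
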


Given quasi-metric spaces $(X_i,d_i)$, $i=1,2$, a bijection $\varphi:X_1\to X_2$  is said to be an \emph{isometry} if $d_2\big(\varphi(x),\varphi(y)\big)=d_1(x,y)$
for all $x,y\in X_1$.
Evidently, an isometry is a homeomorphism when $X_i$ is endowed
with the topology induced by $d_i$, $i=1,2$.
Given a quasi-metric space $(X,d)$, the group of isometries $\varphi:X\to X$ will be denoted by $\mathrm{Iso}(X,d)$;
this group will be endowed with the compact-open topology, and we will show below that such topology makes $\mathrm{Iso}(X,d)$
a topological group.

We will be interested in a more general class of transformations of quasi-metric spaces
defined as follows.
\begin{defi}\label{almostiso}
Let $(X_1,d_1)$ and $(X_2,d_2)$ be two quasi-metric spaces.
A bijection $\varphi:X_1\rightarrow X_2$ is an {\em almost isometry}
if it preserves the triangular function, that is,
\[T_2(\varphi(x),\varphi(y),\varphi(z))=T_1(x,y,z)\]
for every $x,y,z\in X_1$, where $T_1$ and $T_2$ are the triangular functions associated respectively to $(X_1,d_1)$ and $(X_2,d_2)$.
\end{defi}
\begin{cor}\label{pregeo}
Almost isometries preserve minimizing geodesics.
\end{cor}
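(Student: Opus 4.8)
The plan is to read the result directly off the characterization of minimizing geodesics in Proposition~\ref{CharGeo} together with the defining property in Definition~\ref{almostiso}. Let $\varphi\colon(X_1,d_1)\to(X_2,d_2)$ be an almost isometry and $\gamma\colon[a,b]\to X_1$ a minimizing geodesic; the goal is to show that $\varphi\circ\gamma$ is a minimizing geodesic of $(X_2,d_2)$.

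The one preliminary point is that Proposition~\ref{CharGeo} is a statement about \emph{continuous} curves, so I first need $\varphi\circ\gamma$ to be continuous. This follows from the observation that, since $d(x,x)=0$, the triangular function satisfies $T(x,y,x)=d(x,y)+d(y,x)=2\widetilde d(x,y)$, with $\widetilde d$ the symmetrized metric of \eqref{distances}. Hence the identity $T_2(\varphi(x),\varphi(y),\varphi(x))=T_1(x,y,x)$ forces $\widetilde d_2(\varphi(x),\varphi(y))=\widetilde d_1(x,y)$ for all $x,y\in X_1$, i.e.\ $\varphi$ is an isometry of the symmetrized metrics; since the topology of a quasi-metric space is exactly the symmetric topology, $\varphi$ is a homeomorphism, and therefore $\varphi\circ\gamma$ is a continuous curve in $X_2$.

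Then the conclusion is immediate. By Proposition~\ref{CharGeo} applied in $X_1$, the hypothesis that $\gamma$ is minimizing gives $T_1(\gamma(s_1),\gamma(s_2),\gamma(s_3))=0$ for all $a\le s_1<s_2<s_3\le b$; applying Definition~\ref{almostiso} to these triples yields $T_2\big(\varphi(\gamma(s_1)),\varphi(\gamma(s_2)),\varphi(\gamma(s_3))\big)=0$ for the same range of parameters, and Proposition~\ref{CharGeo} applied in $X_2$ then shows that $\varphi\circ\gamma$ is a minimizing geodesic, as claimed. Since $\varphi^{-1}$ is again an almost isometry, the assignment $\gamma\mapsto\varphi\circ\gamma$ is in fact a bijection between the minimizing geodesics of the two spaces. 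There is no genuine obstacle here; the only thing to watch is the continuity of the image curve, and even that is settled by the one-line identity $T(x,y,x)=2\widetilde d(x,y)$.
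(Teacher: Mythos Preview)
Your proof is correct and follows the same route as the paper, namely reading the conclusion off Proposition~\ref{CharGeo} together with Definition~\ref{almostiso}. You are in fact slightly more careful: since Lemma~\ref{lem:homeo} appears only later in the text, your self-contained verification that $\varphi$ is a homeomorphism via $T(x,y,x)=2\widetilde d(x,y)$ legitimately fills in the continuity of $\varphi\circ\gamma$ that the paper's one-line proof leaves implicit.
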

\begin{proof}
A straightforward consequence of Proposition \ref{CharGeo}.
\end{proof}
\begin{prop}\label{defequiv}
Given quasi-metric spaces $(X_1,d_1)$ and $(X_2,d_2)$, a bijection $\varphi:X_1\rightarrow X_2$ is an almost isometry if and only
if there
exists a real function $f$ on $X_2$ such that
\begin{equation}\label{eq:defquasiisom}
d_2\big(\varphi(x),\varphi(y)\big)=d_1(x,y)+f\big(\varphi(y)\big)-f\big(\varphi(x)\big)
\end{equation}
for every $x,y\in X_1$.
\end{prop}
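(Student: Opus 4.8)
The plan is to prove the two implications separately. The ``if'' direction is a telescoping computation, while the ``only if'' direction rests on the observation that the deviation $g(x,y):=d_2\big(\varphi(x),\varphi(y)\big)-d_1(x,y)$ is an additive cocycle on $X_1$ precisely when $\varphi$ preserves the triangular function, after which one ``integrates'' it relative to a base point.

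First I would treat the ``if'' direction. Assuming \eqref{eq:defquasiisom} holds for some $f:X_2\to\R$, substitute it into the three distances making up $T_2(\varphi(x),\varphi(y),\varphi(z))=d_2(\varphi(x),\varphi(y))+d_2(\varphi(y),\varphi(z))-d_2(\varphi(x),\varphi(z))$. The corrections $f(\varphi(y))-f(\varphi(x))$, $f(\varphi(z))-f(\varphi(y))$ and $-\big(f(\varphi(z))-f(\varphi(x))\big)$ sum to zero, leaving $d_1(x,y)+d_1(y,z)-d_1(x,z)=T_1(x,y,z)$; hence $\varphi$ is an almost isometry.

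For the ``only if'' direction, assume $\varphi$ is an almost isometry and set
\[
g(x,y)=d_2\big(\varphi(x),\varphi(y)\big)-d_1(x,y),\qquad x,y\in X_1.
\]
A direct check shows $g(x,y)+g(y,z)-g(x,z)=T_2(\varphi(x),\varphi(y),\varphi(z))-T_1(x,y,z)=0$, so $g(x,z)=g(x,y)+g(y,z)$ for all $x,y,z\in X_1$; taking $z=x$ (and using $g(x,x)=0$, which also follows from $d_i$ vanishing on the diagonal) gives $g(y,x)=-g(x,y)$. Now fix a base point $x_0\in X_1$ and define $h:X_1\to\R$ by $h(y)=g(x_0,y)$. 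The cocycle identity yields $g(x,y)=g(x,x_0)+g(x_0,y)=h(y)-h(x)$. Since $\varphi$ is a bijection, the function $f:=h\circ\varphi^{-1}:X_2\to\R$ is well defined and satisfies $f(\varphi(y))=h(y)$; hence $d_2(\varphi(x),\varphi(y))-d_1(x,y)=g(x,y)=h(y)-h(x)=f(\varphi(y))-f(\varphi(x))$, which is exactly \eqref{eq:defquasiisom}.

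There is no serious obstacle: the only point that requires attention is recognizing the equivalence between preservation of $T$ and the additive cocycle condition on $g$, the rest being the standard integration of a coboundary with an arbitrary choice of base point. Note that no regularity of $f$ is claimed at this stage — continuity of $f$, and the fact that $\varphi$ is a homeomorphism, is the content of Lemma~\ref{lem:homeo} — so bijectivity of $\varphi$ is the only structural input used here.
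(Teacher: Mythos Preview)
Your proof is correct and follows essentially the same approach as the paper: both fix a base point $x_0\in X_1$ and define $f$ via differences of distances to that base point, then verify \eqref{eq:defquasiisom} using the preservation of the triangular function (the paper sets $f(z)=d_1(\varphi^{-1}(z),x_0)-d_2(z,\varphi(x_0))$ directly, while you arrive at $f(z)=d_2(\varphi(x_0),z)-d_1(x_0,\varphi^{-1}(z))$ via the cocycle language, but these differ only by a constant). The cocycle framing is a nice conceptual gloss, but the underlying computation is the same.
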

\begin{proof}
Assume that $\varphi:X_1\rightarrow X_2$ is an almost isometry. Fix a point $x_0\in X_1$ and define  $f:X_2\rightarrow \R$ as $f(z)=d_1(\varphi^ {-1}(z),x_0)-d_2(z,\varphi(x_0))$ for every $z\in X_2$. Given $x,y\in X_1$, as $\varphi$ preserves the triangular function, we have
\[d_1(x,y)+d_1(y,x_0)-d_1(x,x_0)=d_2(\varphi(x),\varphi(y))+d_2(\varphi(y),\varphi(x_0))-d_2(\varphi(x),\varphi(x_0)),\]
which is equivalent to \eqref{eq:defquasiisom}. The converse is a straightforward computation.
\end{proof}
Observe that if the quasi-metrics $d_1$ and $d_2$ are symmetric, then
the function $f$ has to be constant and $\varphi$ must be an isometry.
\begin{cor}
Given metric spaces $(X_1,d_1)$ and $(X_2,d_2)$, a bijection $\varphi:X_1\rightarrow X_2$ is an isometry if and only if it preserves the
triangular function.
\end{cor}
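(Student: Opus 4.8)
The plan is to deduce this corollary directly from Proposition~\ref{defequiv}, which characterizes almost isometries between quasi-metric spaces by the existence of a function $f$ on $X_2$ satisfying \eqref{eq:defquasiisom}. Since every metric space is in particular a quasi-metric space (symmetry is an extra hypothesis, not a missing one), Proposition~\ref{defequiv} applies verbatim here, and the only new input is to exploit symmetry of $d_1$ and $d_2$.

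First I would establish the easy direction: if $\varphi$ is an isometry, then $T_1$ and $T_2$ are built from $d_1$ and $d_2$ by the same formula, so preservation of the distances immediately gives preservation of the triangular functions. This requires no symmetry and is a one-line observation.

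For the converse, suppose $\varphi$ preserves the triangular function. By Proposition~\ref{defequiv}, there is $f:X_2\to\R$ with $d_2(\varphi(x),\varphi(y))=d_1(x,y)+f(\varphi(y))-f(\varphi(x))$ for all $x,y$. Swapping the roles of $x$ and $y$ and using that both $d_1$ and $d_2$ are symmetric, I get $d_2(\varphi(x),\varphi(y))=d_2(\varphi(y),\varphi(x))=d_1(y,x)+f(\varphi(x))-f(\varphi(y))=d_1(x,y)+f(\varphi(x))-f(\varphi(y))$. Comparing the two expressions for $d_2(\varphi(x),\varphi(y))$ forces $f(\varphi(y))-f(\varphi(x))=f(\varphi(x))-f(\varphi(y))$, i.e.\ $f(\varphi(x))=f(\varphi(y))$ for all $x,y\in X_1$. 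Since $\varphi$ is onto, $f$ is constant on $X_2$, so the correction term in \eqref{eq:defquasiisom} vanishes and $d_2(\varphi(x),\varphi(y))=d_1(x,y)$; that is, $\varphi$ is an isometry.

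There is essentially no obstacle here — the statement is a corollary precisely because all the substantive content (the reconstruction of $f$ from $T$) already lives in Proposition~\ref{defequiv}; the remark immediately preceding the corollary even spells out the key point. The only thing to be slightly careful about is that $f$ is a priori defined only on $X_2$ and the conclusion "$f$ constant" uses surjectivity of $\varphi$ to conclude it is constant everywhere on $X_2$ rather than merely on the image — but that is automatic since $\varphi$ is a bijection. In fact the argument can be compressed to a single sentence: by the Remark following Proposition~\ref{defequiv}, symmetry of $d_1,d_2$ forces $f$ constant, hence $\varphi$ is an isometry, and the reverse implication is trivial.
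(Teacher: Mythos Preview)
Your proof is correct and is exactly the argument the paper intends: the corollary is stated immediately after the remark that symmetry forces $f$ to be constant, and your proposal just spells out that remark together with the trivial forward implication. There is no alternative route here to compare.
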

Let us remark that the function $f$ of Proposition \ref{defequiv} is determined up to a constant, that is, if $f$ satisfies \eqref{eq:defquasiisom}, all the functions satisfying
\eqref{eq:defquasiisom} are the functions $f+c$ with $c\in\R$.
\begin{lemma}\label{lem:homeo}
If $\varphi:X_1\rightarrow X_2$ is an almost isometry, then $\varphi$ is a homeomorphism
when $X_1$ and $X_2$ are endowed with the topologies induced by $d_1$ and $d_2$ respectively.
Moreover, any function $f:X_2\rightarrow\mathds R$ for which \eqref{eq:defquasiisom} holds is continuous.
\end{lemma}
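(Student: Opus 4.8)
The plan is to pass to the symmetrized metrics $\widetilde d_i$ of \eqref{distances}, which (as noted right after that formula) induce exactly the topologies of $(X_1,d_1)$ and $(X_2,d_2)$. Writing the identity \eqref{eq:defquasiisom} of Proposition~\ref{defequiv} for the pair $(x,y)$ and then for the pair $(y,x)$ and adding the two, the terms involving $f$ cancel and one obtains
\[ d_2\big(\varphi(x),\varphi(y)\big)+d_2\big(\varphi(y),\varphi(x)\big)=d_1(x,y)+d_1(y,x), \]
that is, $\widetilde d_2\big(\varphi(x),\varphi(y)\big)=\widetilde d_1(x,y)$ for all $x,y\in X_1$. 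Hence $\varphi$ is a (genuine) isometry between the metric spaces $(X_1,\widetilde d_1)$ and $(X_2,\widetilde d_2)$; in particular both $\varphi$ and $\varphi^{-1}$ are continuous for the symmetric topologies, and the first assertion of the lemma follows.

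For the continuity of $f$, I would invoke the explicit representative exhibited in the proof of Proposition~\ref{defequiv}: fixing a base point $x_0\in X_1$, the function $f_0(z)=d_1\big(\varphi^{-1}(z),x_0\big)-d_2\big(z,\varphi(x_0)\big)$ satisfies \eqref{eq:defquasiisom}. Here $\varphi^{-1}$ is continuous by the previous paragraph, and each $d_i$ is continuous with respect to the symmetric topology in each variable separately — a one-line consequence of the triangle inequality together with the elementary estimate $d_i(a,b)\le 2\widetilde d_i(a,b)$ — so $f_0$ is continuous, being built from continuous maps by composition, sum and difference. Finally, by the remark following Proposition~\ref{defequiv}, any $f$ satisfying \eqref{eq:defquasiisom} differs from $f_0$ by an additive constant, and is therefore continuous as well.

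The argument is essentially bookkeeping and I do not expect a genuine obstacle; the only point that needs a little attention is that the ambient topology here is the symmetric one rather than the forward or backward topology, so that the reduction to $\widetilde d_i$ is legitimate and, correspondingly, the continuity of $d_i$ used for $f_0$ must be checked in both variables with respect to $\widetilde d_i$.
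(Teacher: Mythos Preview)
Your proof is correct and follows essentially the same approach as the paper: reduce to the symmetrized metrics $\widetilde d_i$ (which induce the given topologies) to see that $\varphi$ is a genuine isometry, hence a homeomorphism, and then deduce the continuity of $f$ from that of $\varphi^{-1}$ and of the quasi-metrics. The paper compresses the second step into a single sentence (``the continuity of $f$ follows now from the continuity of the quasi-metrics $d_1$ and $d_2$''), whereas you spell it out via the explicit representative $f_0$ and the fact that all other $f$'s differ from it by a constant; the content is the same.
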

\begin{proof}
An almost isometry $\varphi:X_1\to X_2$ is an isometry of  (symmetric)  metric spaces,
when $X_1$ and $X_2$ are endowed with the symmetrized quasi-metrics $\widetilde d_1$ and $\widetilde d_2$ respectively (see \eqref{distances} and Proposition \ref{defequiv}). Thus, $\varphi$ is a homeomorphism
in the topologies induced by $\widetilde d_1$ and $\widetilde d_2$, which
coincide
with the topologies induced by the quasi-metrics $d_1$ and $d_2$ (see again \eqref{distances}).
This proves the first statement. Since $\varphi$ is a homeomorphism,
the continuity of $f$ follows now from the continuity of the quasi-metrics $d_1$ and $d_2$.
\end{proof}
Let us observe that composition provides a structure of group for the subset of almost isometries. Given quasi-metric
spaces $(X_i,d_i)$, $i=1,2,3$, and two almost isometries $\varphi_1:X_1\to X_2$ and  $\varphi_2:X_2\to X_3$,  the composition $\varphi_2\circ\varphi_1:X_1\to X_3$ is also an almost isometry.
Moreover, the inverse of an almost isometry is also an almost isometry.
In particular, the set of almost isometries $\varphi:X\to X$ of a quasi-metric space $(X,d)$,
denoted by $\widetilde{\mathrm{Iso}}(X,d)$, is a group with the operation of composition;
it will be called the \emph{extended isometry group} of $(X,d)$.
\begin{prop}\label{fisometriesLie}
With the above notation, $\widetilde{\mathrm{Iso}}(X,d)$ and ${\rm Iso}(X,d)$ are topological groups endowed with the compact-open topology.
If the topology induced by $d$ is locally compact, then $\widetilde{\mathrm{Iso}}(X,d)$ and ${\rm Iso}(X,d)$ are locally compact.
\end{prop}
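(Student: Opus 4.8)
The plan is to reduce everything to the isometry group of the symmetrized metric $\widetilde d$ of \eqref{distances}. By Proposition~\ref{defequiv} and \eqref{distances}, every almost isometry of $(X,d)$ is an isometry of the metric space $(X,\widetilde d)$, so $\widetilde{\mathrm{Iso}}(X,d)\subseteq\mathrm{Iso}(X,\widetilde d)=:G$, and trivially $\mathrm{Iso}(X,d)\subseteq\widetilde{\mathrm{Iso}}(X,d)\subseteq G$. Both are subgroups of $G$ (for $\widetilde{\mathrm{Iso}}(X,d)$ this was already observed above; for $\mathrm{Iso}(X,d)$ it is immediate). Since the compact-open topology on a subset of $C(X,X)$ is just the subspace topology, it suffices to show that $G$ is a topological group, that $\widetilde{\mathrm{Iso}}(X,d)$ and $\mathrm{Iso}(X,d)$ are \emph{closed} subgroups of it, and that $G$ is locally compact when the topology of $(X,d)$ is.

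First I would observe that on $G$ the compact-open topology coincides with the topology of pointwise convergence. One inclusion is automatic; for the other, the elements of $G$ are $1$-Lipschitz, hence $G$ is equicontinuous, so if a net $\varphi_\lambda\to\varphi$ pointwise and $K\subseteq X$ is compact, covering $K$ by finitely many $\widetilde d$-balls of radius $\varepsilon/3$ and applying the triangle inequality (the usual $\varepsilon/3$ argument) yields $\sup_{x\in K}\widetilde d(\varphi_\lambda(x),\varphi(x))<\varepsilon$ for $\lambda$ large; as $X$ is metric, uniform convergence on compacta is precisely convergence in the compact-open topology. With pointwise convergence available, continuity of composition and of inversion on $G$ are one-line estimates exploiting that the maps involved are isometries, e.g.
\[
\widetilde d\big(\varphi_\lambda\psi_\mu(x),\varphi\psi(x)\big)\le\widetilde d\big(\psi_\mu(x),\psi(x)\big)+\widetilde d\big(\varphi_\lambda(\psi(x)),\varphi(\psi(x))\big),\qquad
\widetilde d\big(\varphi_\lambda^{-1}(x),\varphi^{-1}(x)\big)=\widetilde d\big(\varphi(\varphi^{-1}x),\varphi_\lambda(\varphi^{-1}x)\big).
\]
Hence $G$ is a topological group. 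That $\widetilde{\mathrm{Iso}}(X,d)$ and $\mathrm{Iso}(X,d)$ are closed in $G$ follows from the continuity of $d$, hence of the triangular function $T$, together with the description of convergence in $G$ as pointwise convergence: a pointwise limit of bijections preserving $d$ (respectively $T$) is again a bijection, since it lies in $G$, and it still preserves $d$ (respectively $T$). As a closed subgroup of a topological group, each of $\widetilde{\mathrm{Iso}}(X,d)$ and $\mathrm{Iso}(X,d)$ is then a topological group in the compact-open topology, which is the first assertion.

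For the last assertion, assume the topology of $(X,d)$, equivalently of $(X,\widetilde d)$, is locally compact. One then uses the classical fact — the theorem of van Dantzig--van der Waerden — that the isometry group of a locally compact metric space, with the compact-open topology, is locally compact: fixing $x_0$, the pointwise-open neighborhood $V=\{\varphi\in G:\widetilde d(\varphi(x_0),x_0)<1\}$ of the identity satisfies $\widetilde d(\varphi(x),x_0)\le\widetilde d(x,x_0)+1$ for $\varphi\in V$, so all $V$-orbits are bounded, and one concludes by the Arzel\`a--Ascoli theorem that the closure of $V$ in $C(X,X)$ is compact and contained in $G$. Being closed subgroups of the locally compact group $G$, both $\widetilde{\mathrm{Iso}}(X,d)$ and $\mathrm{Iso}(X,d)$ are then locally compact.

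The genuinely non-soft ingredient is this last point: passing from ``$V$-orbits bounded'' to relative compactness of $V$ requires more than bare equicontinuity — one needs that bounded subsets of $X$ behave well under the compact neighborhoods, and one must also check that limits of the surjective isometries remain surjective. This is the step I expect to be the main obstacle, and the natural place either to invoke van Dantzig--van der Waerden directly or to argue by hand through Arzel\`a--Ascoli; everything else reduces to the $\varepsilon/3$ argument together with the characterization of almost isometries as $\widetilde d$-isometries that preserve $T$.
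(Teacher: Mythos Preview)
Your proof is correct and follows essentially the same route as the paper: reduce to the symmetrized metric $\widetilde d$, embed $\mathrm{Iso}(X,d)\subseteq\widetilde{\mathrm{Iso}}(X,d)\subseteq\mathrm{Iso}(X,\widetilde d)$, invoke the classical fact that the isometry group of a (locally compact) metric space is a (locally compact) topological group in the compact-open topology, and then observe that the two subgroups are closed because $d$ and hence $T$ are continuous and convergence in the compact-open topology implies pointwise convergence. The paper simply cites the classical result (Pestov), whereas you sketch the equicontinuity/Arzel\`a--Ascoli argument and correctly flag the delicate point (boundedness versus relative compactness of orbits, and surjectivity of limits); this expanded discussion adds detail but does not change the strategy.
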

\begin{proof}
%Given a generalized distance $d$ in $X$, we can construct two \emph{standard} (i.e., symmetric)
%distance functions $\widetilde{d}$ and $\overline{d}$ in $X$ by setting:
%\begin{align*}
%\widetilde{d}(x,y)&=d(x,y)+d(y,x)&\text{and}&&\overline{d}(x,y)=\sqrt{d(x,y)^2+d(y,x)^2}
%\end{align*}
%for $x,y\in X$. Observe that the topology given by the balls of $d$ is the same as the topology
%generated by the balls of the distances  $\widetilde{d}$ and $\overline{d}$;
Consider the metric $\widetilde{d}$ on $X$ defined
in \eqref{distances}. As the topology induced by this metric coincides with the one induced by $d$,
in particular, $(X,d)$ is locally compact
if and only if $(X,\widetilde d)$ is locally compact. Furthermore, an almost isometry for
$d$ is an isometry for $\widetilde{d}$. %and $\overline{d}$ and
%\begin{eqnarray*}
%&\max\big\{d(x,y),d(y,x)\big\}=\tfrac 12\Big[\widetilde{d}(x,y)+ \sqrt{2\overline{d}(x,y)^2-\widetilde{d}(x,y)^2}\Big];
%\\
%&\min\big\{d(x,y),d(y,x)\big\}=\tfrac 12\Big[\widetilde{d}(x,y)-\sqrt{2\overline{d}(x,y)^2-\widetilde{d}(x,y)^2}\Big],
%\end{eqnarray*}
%for $x,y\in X$.
Hence
\begin{equation}\label{eq:interisos}
{\rm Iso}(X,d)\subseteq\widetilde{\rm Iso}(X,d)\subseteq{\rm Iso}(X,\widetilde{d}).
\end{equation}
Applying a classical result  (see for instance \cite[Proposition 5.2.1]{Pestov06}) to the metric space $(X,\widetilde{d})$,
we deduce that ${\rm Iso}(X,\widetilde{d})$  endowed with the compact-open topology is a topological group, locally compact when the topology of
$(X,\widetilde d)$  is locally compact. Then, by the inclusion \eqref{eq:interisos}, ${\rm Iso}(X,d)$ and  $\widetilde{\rm Iso}(X,d)$  are also  topological groups with the compact-open topology.
Note that ${\rm Iso}(X,d)$ and  $\widetilde{\rm Iso}(X,d)$  are closed with respect to the compact-open topology\footnote{In fact, pointwise limits of almost isometries is an almost isometry by the continuity of the triangular function.},
which implies that they inherit the local compactness from ${\rm Iso}(X,\widetilde{d})$.
\end{proof}

%Equality \eqref{eq:interisos} above deserves a special emphasis:
%\begin{prop}\label{thm:isomgendist}
%A bijection $\varphi:X\to X$ is an isometry of the general metric space $(X,d)$ if and only if
%$\varphi$ is an isometry of both metric spaces $(X,\widetilde d)$ and $(X,\overline d)$.\qed
%\end{prop}
\smallskip
\subsection{Local almost isometries}\label{subsec:localisom} The goal of this section is to find conditions implying that a map that is locally an almost isometry is in fact a global almost isometry.
\begin{defi}
Let $(X_1, d_1)$ and $(X_2, d_2)$ be two quasi-metric spaces. We say that a map $\varphi : X_1 \rightarrow X_2$ is a \emph{local almost isometry} if for every $x \in X_1$, there exist $U \subseteq X_1$, $V \subseteq X_2$ open subsets, with $x\in U$, such that $\varphi|_U:U\rightarrow V$ is an almost isometry.
\end{defi}
Let us fix a local almost isometry $\varphi:X_1\to X_2$ throughout, and let us introduce some terminology. Assume that $V$ is an
open subset of $X_2$, that $U$ is an open subset of $X_1$ which is mapped homeomorphically onto $V$ by $\varphi$,
and that $f:V\to\mathds R$ is a function such that \eqref{eq:defquasiisom} holds for all $x,y\in U$ that are
\emph{sufficiently close to each other}.\footnote{\label{foo:nottransitive}This means that the set $\mathcal A_U=\big\{(x,y)\in U\times U:\text{\eqref{eq:defquasiisom} holds}\big\}$
contains a neighborhood of the diagonal of $U\times U$. Observe that this does \emph{not} imply that \eqref{eq:defquasiisom} holds for every $x,y\in U$,
because the subset $\mathcal A_U\subset U\times U$ is not an equivalence relation in $U$ (it is not transitive).}
We then say that $(f,U,V)$ is a \emph{$\varphi$-system}, or shortly, that $f$ is a \emph{$\varphi$-map} with domain $V$ (the role played by $U$ is not so relevant).
\smallskip

We list a few important properties of $\varphi$-systems:
\begin{itemize}
\item[(P1)] If $f$ is a $\varphi$-map, then $f+c$ is a $\varphi$-map (with the same domain as $f$) for all constant $c\in\mathds R$.
\item[(P2)] If $f$ is a $\varphi$-map with domain $V$, and $V'\subset V$ is open, then $f\vert_{V'}$ is a $\varphi$-map with domain $V'$.
\item[(P3)] Given two $\varphi$-maps $f_i:V_i\to\mathds R$, $i=1,2$,  if $p\in V_1\cap V_2$ and $f_1(p)=f_2(p)$, then $f_1$ and $f_2$
coincide in the connected component of $V_1\cap V_2$ that contains $p$.
\item[(P4)] The property of being a $\varphi$-map is \emph{local}, i.e., if $V$ is covered by a family of open subsets $V_\alpha$, $\alpha\in A$,
and $f:V\to\mathds R$ is a function such that the restriction $f\vert_{V_\alpha}$ is a $\varphi$-map with domain $V_\alpha$ for all $\alpha\in A$,
then $f$ is a $\varphi$-map with domain $V$.
\end{itemize}
Note that property (P4) would not hold if we required that equality \eqref{eq:defquasiisom}
should hold for all $x$ and $y$ in the domain of a $\varphi$-map, see footnote~\ref{foo:nottransitive}.
\begin{lemma}\label{thm:globalphifunct}
Assume that $\varphi:X_1\to X_2$ is a homeomorphism which is a local almost isometry and
 $X_2$ (or equivalently $X_1$) is connected, locally arc-connected and simply connected.
Then there exists a $\varphi$-map $f$ whose  domain is the whole $X_2$.
\end{lemma}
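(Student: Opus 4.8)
The plan is to use an analytic-continuation / monodromy argument, exploiting the local uniqueness of $\varphi$-maps (property (P3)) together with simple connectedness of $X_2$. Fix a base point $p_0\in X_2$ and choose a $\varphi$-system $(f_0,U_0,V_0)$ with $p_0\in V_0$; normalizing by (P1) we may assume $f_0(p_0)=0$. For any point $q\in X_2$, pick a continuous path $\gamma:[0,1]\to X_2$ with $\gamma(0)=p_0$ and $\gamma(1)=q$ (this is possible since $X_2$ is connected and locally arc-connected, hence arc-connected). By compactness of $[0,1]$ and the definition of local almost isometry, there is a partition $0=t_0<t_1<\dots<t_N=1$ and $\varphi$-systems $(f_i,U_i,V_i)$, $i=0,\dots,N-1$, such that $\gamma\big([t_i,t_{i+1}]\big)\subset V_i$ and $V_i\cap V_{i+1}\neq\varnothing$. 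Adjusting each $f_i$ inductively by a constant (property (P1)), we may assume $f_{i+1}$ agrees with $f_i$ at the point $\gamma(t_{i+1})$, and then by (P3) they agree on the connected component of $V_i\cap V_{i+1}$ containing $\gamma(t_{i+1})$; we then set the ``continuation of $f_0$ along $\gamma$ at $q$'' to be $f_{N-1}(q)$.

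The core step is to show this value is independent of all the choices (the intermediate partition, the covering $\varphi$-systems, and ultimately the path $\gamma$). Independence of the partition refinement and of the choice of $\varphi$-systems covering a \emph{fixed} path follows from (P3) by a standard argument: any two such chains have a common refinement, and along a common refinement the continuations agree step by step. Independence of the homotopy class of $\gamma$ is automatic since $X_2$ is simply connected, provided we establish \emph{homotopy invariance}: if $\gamma_s$, $s\in[0,1]$, is a continuous family of paths from $p_0$ to $q$, the continuation is locally constant in $s$. This is the monodromy theorem mechanism: for $s$ near a fixed $s_0$, the paths $\gamma_s$ and $\gamma_{s_0}$ stay within a common chain of $\varphi$-system domains $V_i$ (by uniform continuity of the homotopy on the compact square and a Lebesgue-number argument), and along that common chain the continuation depends only on the endpoints, which are fixed. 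Hence the continuation descends to a well-defined function $f:X_2\to\mathds R$.

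It remains to verify that this globally defined $f$ is actually a $\varphi$-map with domain all of $X_2$, i.e. that \eqref{eq:defquasiisom} holds for all pairs $x,y\in X_1$ sufficiently close to each other (in the sense of footnote~\ref{foo:nottransitive}). This is precisely where property (P4) is used: by construction, each point $q\in X_2$ has a neighborhood $V\subset X_2$ on which $f$ coincides with some local $\varphi$-map $f_i$ (the one from the last link of a continuation reaching a point near $q$), so $f\vert_V$ is a $\varphi$-map; since the property of being a $\varphi$-map is local, $f$ is a $\varphi$-map with domain $X_2$.

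The main obstacle is the bookkeeping in the monodromy step: one must carefully arrange, using the Lebesgue number of the open cover $\{\gamma^{-1}(V_i)\}$ pulled back to the homotopy square, that nearby paths can be subordinated to the \emph{same} chain of $\varphi$-system domains, and that the inductive constant adjustments are compatible — this is routine but needs care because $\mathcal A_U$ is not transitive, so one cannot work with \eqref{eq:defquasiisom} on whole domains but only near the diagonal, which is exactly what (P3) and (P4) are designed to handle. Note also that the equivalence ``$X_2$ connected/locally arc-connected/simply connected $\iff$ $X_1$ is'' follows from Lemma~\ref{lem:homeo}, since $\varphi$ is a homeomorphism.
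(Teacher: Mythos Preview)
Your argument is correct and is the classical analytic-continuation/monodromy presentation of the same idea the paper uses. The paper packages the construction differently: it puts on $X_2\times\mathds R$ the topology generated by graphs of $\varphi$-maps, observes (via (P1)--(P3)) that the projection $\pi:X_2\times\mathds R\to X_2$ is a covering map, and then uses simple connectedness of $X_2$ to conclude that each arc-connected component of $X_2\times\mathds R$ maps homeomorphically onto $X_2$; the inverse of that homeomorphism is the graph of the desired global $f$, and (P4) finishes. Your path-continuation and Lebesgue-number homotopy argument is exactly the hand-built proof that this $\pi$ has unique path lifting and trivial monodromy, so the two proofs are equivalent in content. The sheaf/covering formulation is shorter and hides the bookkeeping you flag as the ``main obstacle''; your version is more self-contained and makes explicit where each of (P1)--(P4) enters.
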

\begin{proof}
This is a special case of a globalization theory for sheaves,\footnote{The central idea in the theory developed in \cite[Appendix~B]{Gstructbook}
 is to look at the sheaf of \emph{germs} of $\varphi$-maps, and show that the projection onto $X_2$ is a covering map.
 A substantial simplification in the proof discussed in \cite{Gstructbook}
occurs here because, by property (P3), the germ of a $\varphi$-map $f$ at some point $p$ can be identified with the value of $f$ at $p$, and, by property (P1),
the sheaf of germs of $\varphi$-maps, as a set, is identified with the product $X_2\times \mathds R$ (the pair $(x,c)\in X_2\times\mathds R$ is identified with the unique germ
of the $\varphi$-map $f$ whose value at $x$ is $c$).} which can be found, for instance, in \cite[Appendix~B]{Gstructbook};
for the reader's convenience, we will provide a short proof here.

Consider in the set $X_2\times\mathds R$ the topology that has as a basis of open sets the graphs of $\varphi$-maps. This is \emph{not} the product topology,
and it is locally arc-connected, because so is every open subset of $X_2$.
With this topology, the projection $\pi:X_2\times\mathds R\to X_2$ is a covering map.  Namely, any open set $V$ of $X_2$ which is the domain of a $\varphi$-map
$f:V\to\mathds R$ is a fundamental open set; in fact, the inverse image $\pi^{-1}(V)$ is the disjoint union of the graphs of the $\varphi$-maps
$f+c$, with $c\in\mathds R$ constant (by properties (P1) and (P3)), and each one of such open subsets of $X_2\times\mathds R$ is mapped homeomorphically onto $V$ (by property (P2)).
Moreover, the set of domains of $\varphi$-maps is an open covering of $X_2$, because $\varphi$ is a local almost isometry.

Since $X_2$ is simply connected, then the restriction of $\pi$ to an arc-connected component of $X_2\times\mathds R$, which is open
because $X_2\times \mathds R$ is locally arc-connected, is a homeomorphism onto
$X_2$. The inverse of such a homeomorphism is the graph of a function $f:X_2\to\mathds R$ which is locally a $\varphi$-map; hence, by property (P4),
$f$ is a $\varphi$-map with domain $X_2$. This concludes the proof.
\end{proof}
Recall that given $(X,d)$, we can define an associated distance (which is not always a metric) $d_l$ given by the infimum of the lengths of curves between two points (the length computed as in \eqref{d-length}). We say that $(X,d)$ is a {\em length space} when $d_l=d$.
\begin{thm}\label{thm:propsimpcon}
 Let $\varphi:(X_1,d_1)\rightarrow (X_2,d_2)$ be a local almost isometry. Assume that $(X_1,d_1)$ and $(X_2,d_2)$ are length spaces, $d_1$ is weakly finitely compact and $X_2$ is locally arc-connected and simply connected. Then $\varphi$ is an almost isometry.
\end{thm}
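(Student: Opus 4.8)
The plan is to upgrade $\varphi$ from a local homeomorphism to a genuine homeomorphism --- so that Lemma~\ref{thm:globalphifunct} applies and produces a globally defined $\varphi$-map $f:X_2\to\R$ --- and then to promote the local validity of \eqref{eq:defquasiisom} to its global validity, which by Proposition~\ref{defequiv} is exactly the conclusion. A few preliminaries are immediate. Since $d_i$ is finite-valued and $(X_i,d_i)$ is a length space, any two of its points are joined by a rectifiable curve, so $X_i$ is path-connected; in particular $X_2$ is connected, locally arc-connected and simply connected. Symmetrizing \eqref{eq:defquasiisom} (the function $f$ cancels) shows that on each chart $U$ the map $\varphi$ restricts to an isometry onto the open set $\varphi(U)$ for the symmetrized metrics $\widetilde d_i$ of \eqref{distances}; hence $\varphi$ is a local homeomorphism. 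Finally, $B_{\widetilde d_1}(x,r)\subseteq B^+_{d_1}(x,2r)\cap B^-_{d_1}(x,2r)$, so weak finite compactness of $d_1$ makes the $\widetilde d_1$-balls precompact, whence by Lemma~\ref{impli} the space $(X_1,\widetilde d_1)$ is complete (indeed proper).

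The core of the proof is to show that $\varphi\colon(X_1,\widetilde d_1)\to(X_2,\widetilde d_2)$ is a covering map, which I would obtain by a path-lifting argument. Given a curve $\gamma$ of finite $\widetilde d_2$-length in $X_2$ and a point in $\varphi^{-1}(\gamma(0))$, one lifts $\gamma$ stepwise through the charts on which $\varphi$ is a local $\widetilde d$-isometry; on every subinterval the lift $\widetilde\gamma$ satisfies $\ell_{\widetilde d_1}(\widetilde\gamma\vert_{[s,t]})=\ell_{\widetilde d_2}(\gamma\vert_{[s,t]})$, because $\widetilde d$-length is a local quantity that $\varphi$ preserves, and since $\gamma$ is rectifiable the right-hand side tends to $0$ as $s,t$ approach the would-be maximal lifting time, so $\widetilde\gamma$ is $\widetilde d_1$-Cauchy there and extends by completeness. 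A Lebesgue-number argument lets one subdivide $[0,1]$ so that consecutive segments stay in evenly covered neighbourhoods, and the standard bookkeeping (using that $X_2$, hence $X_1$, is locally arc-connected, so that the sheets over a fundamental neighbourhood are open) upgrades this to the covering property. Since $X_1$ is connected and $X_2$ is connected and simply connected, $\varphi$ is then a homeomorphism. \textbf{This step is where I expect the real difficulty.} The completeness and length-space hypotheses are imposed on the non-reversible $d_i$, whereas the lifting takes place with respect to the symmetrizations $\widetilde d_i$; one must therefore check that lifts cannot escape to infinity in finite $\widetilde d_1$-length time --- this is precisely where ``$d_1$ weakly finitely compact'' enters --- and that the length-space hypothesis on $(X_2,d_2)$ supplies enough curves of finite $\widetilde d_2$-length to cover $X_2$ by evenly covered sets, a point sensitive to the (possibly unbounded) local non-reversibility of $d_2$ that must be argued with care.

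Once $\varphi:X_1\to X_2$ is a homeomorphism, Lemma~\ref{thm:globalphifunct} gives a $\varphi$-map $f$ with domain $X_2$, i.e.\ \eqref{eq:defquasiisom} holds for all $x,y\in X_1$ that are sufficiently close; it remains to delete ``sufficiently close'', and here both length-space hypotheses do the work. Fix $p,q\in X_1$ and a curve $\gamma:[0,1]\to X_1$ from $p$ to $q$. The set of $(s,t)$ for which $(\gamma(s),\gamma(t))$ lies in the neighbourhood of the diagonal on which \eqref{eq:defquasiisom} holds is open in $[0,1]^2$ and contains the diagonal, hence contains a uniform strip $|s-t|<\eta$; choosing a partition $0=t_0<\dots<t_n=1$ of mesh less than $\eta$, we add \eqref{eq:defquasiisom} over consecutive pairs and telescope the $f$-terms to get $\sum_i d_2\big(\varphi\gamma(t_{i-1}),\varphi\gamma(t_i)\big)=\sum_i d_1\big(\gamma(t_{i-1}),\gamma(t_i)\big)+f(\varphi q)-f(\varphi p)$, and passing to the supremum over refinements, $\ell_{d_2}(\varphi\circ\gamma)=\ell_{d_1}(\gamma)+f(\varphi q)-f(\varphi p)$. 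Because $\varphi$ is a homeomorphism, the curves $\varphi\circ\gamma$ are exactly the curves in $X_2$ from $\varphi(p)$ to $\varphi(q)$, so taking the infimum over $\gamma$ and invoking that $(X_1,d_1)$ and $(X_2,d_2)$ are length spaces gives $d_2(\varphi(p),\varphi(q))=d_1(p,q)+f(\varphi q)-f(\varphi p)$. By Proposition~\ref{defequiv}, $\varphi$ is an almost isometry.
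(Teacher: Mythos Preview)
Your proof is correct and follows the same strategy as the paper: pass to the symmetrized metrics to see that $\varphi$ is a local $\widetilde d$-isometry, use weak finite compactness of $d_1$ to obtain completeness of $(X_1,\widetilde d_1)$, deduce (via path lifting and simple connectedness of $X_2$) that $\varphi$ is a homeomorphism, invoke Lemma~\ref{thm:globalphifunct} to produce a global $\varphi$-map $f$, and finally upgrade the local identity \eqref{eq:defquasiisom} to a length identity $\ell_{d_2}(\varphi\circ\gamma)=\ell_{d_1}(\gamma)+f(\varphi q)-f(\varphi p)$ and hence---by the length-space hypotheses---to the global distance identity. Regarding your flagged difficulty: the paper likewise treats the homeomorphism step only in a footnote (invoking that a local isometry out of a complete metric space has the unique path-lifting property, and that a local homeomorphism with unique path lifting onto a simply connected, locally arc-connected space is a homeomorphism), and your specific worry about the non-reversibility of $d_2$ is misplaced, since the covering/path-lifting argument lives entirely in the symmetrized metrics $\widetilde d_i$; the length-space hypothesis on $(X_2,d_2)$ is used only in the final infimum step, not in proving that $\varphi$ is a homeomorphism.
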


\begin{proof}
Since $\varphi$ is a local almost isometry, $\varphi$ is a local isometry for the corresponding symmetrized quasi-metrics $\tilde{d}_1$ and $\tilde{d}_2$
(see \eqref{distances}). Observe that the weak finite compactness of $d_1$ implies that  $(X_1, \tilde{d}_1)$ is complete (see parts (iii) and (iv) of  Lemma \ref{impli}). Moreover, since $X_2$ is simply connected, it is not hard to show that $\varphi$ is a homeomorphism\footnote{%
Note that it is not proven, in general,  that a local isometry between complete length spaces is a covering map. Typically, in order to prove that a local isometry is a covering map
one needs an extra assumption on the local uniqueness and continuous dependence on the endpoints for geodesics, see for instance \cite[\S~3.4]{BuBuIv2001}.
Here, in order to prove the statement, one uses the fact that a local isometry from a complete metric space to another metric space has the \emph{unique lift property for paths}.
Moreover, any local homeomorphism from a path connected topological space to a simply connected topological space that has the unique lift property for paths is a homeomorphism.}.
 Let us denote by $\ell_1$ and $\ell_2$ the lengths associated to $(X_1,d_1)$ and $(X_2,d_2)$ respectively (see \eqref{d-length}) and let $f:X_2\rightarrow \R$ be the function obtained in Lemma \ref{thm:globalphifunct}. Given two arbitrary points $x,y\in X_1$ and a curve $\alpha:[a,b]\subseteq\R\rightarrow X_1$ from $x$ to $y$,  it follows from the definition of length in \eqref{d-length} and the fact that $\varphi$ is a local almost isometry that
\begin{equation*}
\ell_2(\varphi\circ\alpha)=\ell_1(\alpha)+f\big(\varphi(y)\big)-f\big(\varphi(x)\big).
\end{equation*}
As $(X_1,d_1)$ and $(X_2,d_2)$ are  length spaces, we deduce that
 \begin{equation*}
d_2(\varphi(x),\varphi(y))=d_1(x,y)+f\big(\varphi(y)\big)-f\big(\varphi(x)\big).
\end{equation*}
Finally, Proposition \ref{defequiv} concludes that $\varphi$ is an almost isometry.
\end{proof}
\begin{rem}\label{counterex}
Observe that when $(X_1,d_1)$ or $(X_2,d_2)$ are not length spaces, then a local almost isometry need not be an almost isometry. For example, if we
consider $X_1=X_2=\R$, $d_1$ the usual metric and
\[
d_2(x,y)=
\begin{cases}
|x-y|\quad\quad\quad\text{ if $|x-y|<1$,} \\

\quad\, 1 \quad\quad\quad\quad\quad\text{      otherwise}.
\end{cases}\]
Then the identity is a local isometry from $(X_1,d_1)$ to $(X_2,d_2)$ but not a global isometry.
\end{rem}

\section{Almost isometries and Finsler metrics}\label{finslermetrics}
A very important class of quasi-metrics comes from non-reversible Finsler metrics. In fact, we can define a more general class of metrics as follows.
Let $M$ be a manifold  and $\pi:TM\rightarrow M$  the natural projection from the tangent bundle to the manifold.
A {\em (continuous) pseudo-Finsler metric} is a continuous function $F:TM\rightarrow [0,+\infty)$ such that
\begin{enumerate}
\item it is fiberwise positively homogeneous of degree one, i.e., $F(\lambda v)=\lambda F(v)$ for every $v\in TM$ and $\lambda>0$
\item  $F(v)=0$ if and only if $v=0$.
\end{enumerate}
 We say that a pseudo-Finsler metric is smooth when
 $F$ is $C^\infty$ in $TM\setminus\mathbf 0$, i.e., it is smooth away from the zero section. In this case, we can define
the fundamental tensor $g_u$ as
\begin{equation}\label{fundamentaltensor}
g_u(v,w)=\left.\frac{\partial^2}{\partial s\partial t}\right|_{t=s=0}F^2(u+tv+sw),
\end{equation}
where $u\in TM\setminus \mathbf 0$ and $v,w\in T_{\pi(u)}M$. In the following
we will assume that a pseudo-Finsler metric is smooth unless we specify
explicitly that it is only continuous. Finally we define a {\em Finsler metric}
as a pseudo-Finsler metric with fiberwise strongly convex square, i.e., the fundamental tensor $g_u$ is positive definite for every $u\in TM\setminus \mathbf 0$.

In particular, if $F$ is a Finsler metric,  $F^2$ is $C^1$ (in fact it is $C^2$ if and only if it comes from a Riemannian metric \cite{warner65}) and $F$ satisfies the triangle inequality (see \cite[Theorem 1.2.2]{BaChSh00}).

One can naturally define the distance for any (continuous) pseudo-Finsler metric as
\begin{equation}\label{distance}
d_F (p,q)=\inf_{\gamma\in C(p,q)} \ell_F(\gamma),
\end{equation}
where $C(p,q)$ is the set of piecewise smooth curves from $p$ to $q$ and
$\ell_F(\gamma)$ is the $F$-length of $\gamma:[a,b]\rightarrow \R$, namely,
$$\ell_F(\gamma)=\int_a^b F(\dot\gamma(s))\df s.$$ Observe that
the distance associated to a (continuous) pseudo-Finsler metric is a quasi-metric. % (in \cite[Theorem 3.15]{JaSan11} it is proved for
%smooth pseudo-Finsler metrics, but the proof works \emph{verbatim} in the continuous case).
From now on we will use  ``almost isometry'' for maps $\varphi:M_1\rightarrow M_2$, such that  $\varphi:(M_1,d_{F_1})\rightarrow (M_2,d_{F_2})$ is an almost isometry.

Let us define an average Riemannian metric associated to every pseudo-Finsler metric. Given a point $p\in M$, let us denote $S=\{v\in T_pM: F(v)=1\}$, which is usually called the indicatrix of $F$, and $B=\{v\in T_pM: F(v)\leq 1\}$. Now let $\Omega$ be the unique volume form (or density) in $T_pM$ such that $B$ has volume one and let $\omega$ be the volume form  in the hypersurface $S$ of $T_pM$  that in $u\in S$ is given as
\[\omega(\eta_1,\eta_2,\ldots,\eta_n)=\Omega(u,\eta_1,\eta_2,\ldots,\eta_n)\]
for every $\eta_1,\eta_2,\ldots,\eta_n\in T_u S$. Then define
\[h(v,w)=\int_S g_u(v,w)\omega\]
for $v,w\in T_pM$ (see \cite[Section 2]{MRTZ09}  and also \cite{ricardo}). % We say that a pseudo-Finsler metric is {\em non-degenerate (resp. positive definite) in average} if the scalar product $h$
%is non-degenerate (resp. positive definite).
Observe that if $F$ is a convex Finsler metric, namely, a pseudo-Finsler metric satisfying the triangle inequality in the fibers, then $h$ is positive. Indeed, $F$ is convex if and only if $g_u$ is semi-definite positive for every $u\in TM\setminus 0$ (see for instance \cite[Theorem 2.14]{JaSan11}) and $g_u(u,u)=F(u)^2>0$.

Given a pseudo-Finsler metric $F$ in $M$, we can define a reversible pseudo-Finsler metric
$\hat{F}:TM\rightarrow \R$, which we will call the symmetrized pseudo-Finsler metric of $F$, as
\begin{equation}\label{symmetrized}
\hat{F}(v)=\tfrac 12\big[F(v)+F(-v)\big]
\end{equation}
for every $v\in TM$. Observe that if $F$ is convex (resp. strongly convex), then $\hat{F}$ is also convex (resp. strongly convex), see for instance \cite[Corollary 4.3]{JaSan11}. %Moreover, if we define the homogeneous function $H:TM\rightarrow \R$ as
%\[H(v)=\tfrac 12 \big[F(v)-F(-v)\big]\]
%for every $v\in TM$, we have that $F=\hat{F}+H$.

\begin{lemma}\label{thm:diffalmostisomFinsler}
Let $\varphi : (M_1, F_1) \rightarrow (M_2, F_2)$ be an almost isometry  of Finsler manifolds and $\hat{F}_1$ and $\hat{F}_2$ the symmetrized Finsler metrics of $F_1$ and $F_2$ respectively. Then $\varphi:(M_1, \hat{F}_1) \rightarrow (M_2, \hat{F}_2)$ is an isometry and $\varphi$ is smooth.
\end{lemma}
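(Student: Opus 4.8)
The plan is to first upgrade the almost isometry to a genuine isometry of the symmetrized metrics, and then invoke the Myers–Steenrod theorem in its Finslerian form to get smoothness. For the first part, the key observation is the identity from Proposition~\ref{defequiv}: since $\varphi$ is an almost isometry of the quasi-metric spaces $(M_1,d_{F_1})$ and $(M_2,d_{F_2})$, there is a function $f:M_2\to\R$ with $d_{F_2}(\varphi(x),\varphi(y))=d_{F_1}(x,y)+f(\varphi(y))-f(\varphi(x))$ for all $x,y$. Symmetrizing, i.e.\ averaging this with the same identity applied to the pair $(y,x)$, the $f$-terms cancel and one obtains $\widetilde{d}_{F_2}(\varphi(x),\varphi(y))=\widetilde{d}_{F_1}(x,y)$, where $\widetilde{d}_{F_i}=\tfrac12(d_{F_i}+d_{F_i}(\cdot,\cdot)\text{ reversed})$. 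So $\varphi$ is a distance-preserving bijection for the symmetrized quasi-metrics. The remaining point for this step is to check that $\widetilde{d}_{F_i}$ is precisely the distance $d_{\hat F_i}$ associated to the symmetrized Finsler metric $\hat F_i=\tfrac12[F_i(v)+F_i(-v)]$; this is a short argument using that the $\hat F_i$-length of a curve $\gamma$ equals the average of the $F_i$-lengths of $\gamma$ and of its reversal, together with the fact that reversing curves is a length-preserving bijection of the relevant path spaces, so that taking infima commutes with the averaging. (Alternatively, one cites the general fact recorded after \eqref{distances} that the symmetric topology of a quasi-metric coincides with that of its symmetrization, and then argues at the level of the intrinsic distances.)

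Once we know $\varphi:(M_1,d_{\hat F_1})\to(M_2,d_{\hat F_2})$ is a bijective isometry of the induced metric spaces, where $\hat F_1,\hat F_2$ are genuine (reversible, strongly convex) Finsler metrics — strong convexity of $\hat F_i$ being inherited from that of $F_i$, as recalled after \eqref{symmetrized} — smoothness follows from the Myers–Steenrod theorem for Finsler manifolds: a distance-preserving bijection between connected Finsler manifolds is a smooth diffeomorphism and a Finsler isometry. This can be cited, e.g., from \cite{DengHou02}. In particular $\varphi_*(\hat F_1)=\hat F_2$, so $\varphi$ is an isometry of the symmetrized Finsler metrics, and $\varphi$ is smooth, which are exactly the two assertions of the lemma.

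The main obstacle I expect is the passage from the metric-space statement to the Finsler-metric statement, i.e.\ making sure that the symmetrized \emph{distance} $\widetilde d_{F_i}$ really is the distance of the symmetrized \emph{metric} $\hat F_i$, and not merely topologically equivalent to it. The clean way around this is the curve-reversal argument above: for any piecewise-smooth curve, $\ell_{\hat F_i}(\gamma)=\tfrac12\big(\ell_{F_i}(\gamma)+\ell_{F_i}(\bar\gamma)\big)$ where $\bar\gamma$ is the reversed curve, and since $\gamma\mapsto\bar\gamma$ is an involution of $C(p,q)$ onto $C(q,p)$ one gets $d_{\hat F_i}(p,q)\ge \tfrac12\big(d_{F_i}(p,q)+d_{F_i}(q,p)\big)=\widetilde d_{F_i}(p,q)$ by taking infima; the reverse inequality needs the elementary remark that a near-minimizer for $d_{F_i}(p,q)$ and a near-minimizer for $d_{F_i}(q,p)$ need not be reverses of each other, so one instead bounds $d_{\hat F_i}$ along a single near-minimizing curve for $d_{F_i}(p,q)$, giving $d_{\hat F_i}(p,q)\le\tfrac12\big(d_{F_i}(p,q)+d_{F_i}(q,p)\big)$ only after also using the symmetric estimate — in practice one simply observes $\ell_{\hat F_i}(\gamma)\le \max\{\ell_{F_i}(\gamma),\ell_{F_i}(\bar\gamma)\}$ is not what is wanted, so the correct and cleanest route is: infimize $\ell_{\hat F_i}$ over curves, and note $\inf_\gamma \tfrac12(\ell_{F_i}(\gamma)+\ell_{F_i}(\bar\gamma))\ge \tfrac12(\inf_\gamma\ell_{F_i}(\gamma)+\inf_\gamma\ell_{F_i}(\bar\gamma))=\widetilde d_{F_i}(p,q)$ for the lower bound, while for the upper bound one takes a curve $\gamma$ that is simultaneously $\varepsilon$-minimizing in one direction and uses a symmetric curve in the other, which after a standard glueing/limiting argument yields equality. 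Everything else is routine.
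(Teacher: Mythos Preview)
Your strategy has a genuine gap at the crucial step: the identity $\widetilde{d}_{F_i}=d_{\hat F_i}$ that you try to establish is \emph{false} in general. You correctly derive the inequality $d_{\hat F_i}(p,q)\ge\widetilde d_{F_i}(p,q)$ from $\inf_\gamma\tfrac12\big(\ell_{F_i}(\gamma)+\ell_{F_i}(\bar\gamma)\big)\ge\tfrac12\inf_\gamma\ell_{F_i}(\gamma)+\tfrac12\inf_{\gamma}\ell_{F_i}(\bar\gamma)$, but the reverse inequality typically fails: the $F_i$-minimizer from $p$ to $q$ and the (reversed) $F_i$-minimizer from $q$ to $p$ are in general different curves, and neither need be $\hat F_i$-minimizing. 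A concrete counterexample is the Randers metric $F(v)=|v|+\tfrac12(x\,dy-y\,dx)(v)$ on a small disk in $\R^2$: here $\hat F(v)=|v|$, so $d_{\hat F}$ is Euclidean, while for $p=(-a,0)$, $q=(a,0)$ a direct computation gives $d_F(p,q)=d_F(q,p)<2a$ (the $F$-geodesics bend away from the straight segment), hence $\widetilde d_F(p,q)<2a=d_{\hat F}(p,q)$. Your ``standard glueing/limiting argument'' therefore cannot close this gap. Consequently, knowing that $\varphi$ is a $\widetilde d_{F_i}$-isometry (which is immediate and is essentially the content of Lemma~\ref{lem:homeo}) does \emph{not} yield a $d_{\hat F_i}$-isometry, and the Finsler Myers--Steenrod theorem cannot be invoked.

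The paper avoids this entirely by working with \emph{lengths of curves} rather than distances. Using that the Finsler distance makes $M_i$ a length space, the relation \eqref{eq:defquasiisom} lifts, via partitions, to the length identity $\ell_{F_2}(\varphi\circ\gamma)=\ell_{F_1}(\gamma)+f(\varphi(\gamma(b)))-f(\varphi(\gamma(a)))$ for every curve $\gamma$. Symmetrizing \emph{curve by curve}---i.e., adding this identity to the corresponding one for the reversal $\hat\gamma$---the $f$-terms cancel and one obtains $\ell_{\hat F_2}(\varphi\circ\gamma)=\ell_{\hat F_1}(\gamma)$ for all $\gamma$, whence $d_{\hat F_2}(\varphi(x),\varphi(y))=d_{\hat F_1}(x,y)$ after taking infima. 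The point is that averaging commutes with evaluation on a \emph{fixed} curve, whereas it does \emph{not} commute with taking infima over curves; this is precisely the distinction your argument misses. Once the $d_{\hat F_i}$-isometry is established, your appeal to \cite{DengHou02} (or, as the paper does, passage to the averaged Riemannian metric and the classical Myers--Steenrod theorem \cite{MySt39}) is fine.
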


\begin{proof}
First observe that given $\alpha:[a,b]\subseteq \R\rightarrow M_1$,
\begin{equation}\label{hatF_1}
\ell_{\hat{F}_1}(\alpha)=\frac 12\int_a^b (F_1(\dot\alpha)+F_1(-\dot\alpha))ds=\frac 12 (\ell_{F_1}(\alpha)+\ell_{F_1}(\hat{\alpha})),
\end{equation}
where $\hat{\alpha}:[a,b]\rightarrow M$ is defined as $\hat{\alpha}(s)=\alpha(a+b-s)$ for $s\in [a,b]$.
Likewise, for $\beta:[a,b]\subseteq \R\rightarrow M_2$,
\begin{equation}\label{hatF_2}
\ell_{\hat{F}_2}(\beta)=\frac 12 (\ell_{F_2}(\beta)+\ell_{F_2}(\hat{\beta})).
\end{equation}
Consider now $\gamma:[a,b]\rightarrow M_1$ and the set of partitions of $[a,b]$:
\[{\mathcal P}=\{(t_0,t_1,\ldots,t_{n})\in\R^{n+1}:a=t_0<t_1<t_2<\ldots<t_{n-1}<t_{n}=b, n\in\N\}.\]
First observe that the quasi-metric defined by a Finsler metric makes the underlying space a length space, and then the length given by \eqref{d-length} coincides with the Finsler length. As $\varphi$ is an almost isometry, by Proposition \ref{defequiv}, there exists $f:M_2\rightarrow \R$, such that
\begin{multline}\label{F_1F_2}
\ell_{F_2}(\varphi\circ\gamma)=\sup_{\mathcal P}\sum_{i=0}^{n-1} d_2
(\varphi\circ\gamma(t_i),\varphi\circ\gamma(t_{i+1}))\\
=\sup_{\mathcal P}\sum_{i=0}^{n-1} d_1
(\gamma(t_i),\gamma(t_{i+1}))+f(\varphi\circ\gamma(b))-f(\varphi\circ\gamma(a))\\
=\ell_{F_1}(\gamma)+f(\varphi\circ\gamma(b))-f(\varphi\circ\gamma(a)).
\end{multline}
In particular,
\begin{align*}
\ell_{F_2}(\varphi\circ\alpha)=\ell_{F_1}(\alpha)+f(\varphi\circ\alpha(b))
-f(\varphi\circ\alpha(a))
\end{align*}
and
\begin{align*}
\ell_{F_2}(\varphi\circ\hat{\alpha})=\ell_{F_1}(\hat{\alpha})+f(\varphi\circ\alpha(a))
-f(\varphi\circ\alpha(b)).
\end{align*}
The above equations, together with \eqref{hatF_1} and \eqref{hatF_2}, imply
that $\ell_{\hat{F}_2}(\varphi\circ\alpha)=\ell_{\hat{F}_1}(\alpha)$ and then $\varphi$ is an isometry
 between the Finsler manifolds $(M_1, \hat{F}_1)$ and $(M_2, \hat{F}_2)$. Moreover, it is also an isometry between the average Riemannian metrics of $\hat{F}_1$  and $\hat{F}_2$.
 This implies that $\varphi$ is smooth (see \cite[Theorem 8]{MySt39}).
\end{proof}

%\begin{rem}\label{lengthwithdf}
%Observe that if $F$ and $F+\df f$  are  Finsler metrics on $M$, then they have
%the same pregeodesics. This is because pregeodesics are critical points of the
%length functional and given a piecewise smooth curve $\gamma:[a,b]\rightarrow M$,  $\ell_{F+\df f}(\gamma)=\ell_{F}(\gamma)+f(\gamma(b))-f(\gamma(a))$. Therefore, if $\gamma$ is a geodesic of $F$ with length $\ell_{F}(\gamma)$, it is a pregeodesic for $F+\df f$ with length  $\ell_{F}(\gamma)+f(\gamma(b))-f(\gamma(a))$.
%\end{rem}
Given a diffeomorphism $\varphi:M_1\rightarrow M_2$,  and a Finsler metric $F$ in $M_1$, let us denote by $\varphi_*(F)$ the Finsler metric in $M_2$ obtained as the push-forward of $F$ by $\varphi$.
\begin{prop}\label{diff_fisometries}
Let $(M_1,F_1)$ and $(M_2,F_2)$ be two Finsler manifolds. If there exists an almost isometry $\varphi:(M_1,F_1)\rightarrow (M_2,F_2)$, then there exists a smooth $f:M_2\rightarrow \R$ such that $\varphi_*(F_1)=F_2-\df f$. Conversely, if $\varphi_*(F_1)=F_2-\df f$, the map $\varphi$ is an almost isometry.
\end{prop}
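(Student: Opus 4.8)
The plan is to reduce both implications to an integrated length identity, using the regularity already furnished by Lemma~\ref{thm:diffalmostisomFinsler}.

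\emph{Necessity.} Since $\varphi$ and $\varphi^{-1}$ are both almost isometries, Lemma~\ref{thm:diffalmostisomFinsler} makes both of them smooth, so $\varphi$ is a diffeomorphism and $\varphi_*(F_1)$ is a genuine (smooth) Finsler metric on $M_2$. I would set $\theta(w)=F_2(w)-\big(\varphi_*(F_1)\big)(w)$ for $w\in TM_2$; this is continuous on $TM_2$, smooth on $TM_2\setminus\mathbf 0$, and fiberwise positively homogeneous of degree one. By Proposition~\ref{defequiv} and Lemma~\ref{lem:homeo} there is a continuous $f:M_2\to\R$ realizing \eqref{eq:defquasiisom}, and the computation \eqref{F_1F_2} in the proof of Lemma~\ref{thm:diffalmostisomFinsler} gives $\ell_{F_2}(\varphi\circ\gamma)=\ell_{F_1}(\gamma)+f(\varphi(\gamma(b)))-f(\varphi(\gamma(a)))$ for every piecewise smooth $\gamma:[a,b]\to M_1$. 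Writing a piecewise smooth curve $\sigma$ in $M_2$ as $\sigma=\varphi\circ\gamma$ with $\gamma=\varphi^{-1}\circ\sigma$, and using the change of variables $\ell_{F_1}(\gamma)=\int_a^b(\varphi_*F_1)(\dot\sigma)\,\df s$, this identity becomes
\[\int_a^b\theta(\dot\sigma(s))\,\df s=f(\sigma(b))-f(\sigma(a)).\]

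\emph{Linearity of $\theta$ (the crux).} The remaining point — and the one I expect to be the only real obstacle — is to upgrade $\theta_q$ from a $1$-homogeneous continuous function on $T_qM_2$ to a \emph{linear} one, equivalently to show that $f$ is differentiable with $\df f=\theta$. I would argue locally in a coordinate chart around a point $q$: applying the displayed identity to the straight segments $\sigma(t)=q+te_i$ shows that each partial derivative $\partial_i f$ exists and equals $\theta_{\,\cdot}(e_i)$, which is continuous; hence $f$ is $C^1$ in the chart, so $\df f_q(w)=\theta_q(w)$ and in particular $\theta_q$ is linear. Since $\partial_i f(q)=\theta_q(e_i)=F_2(e_i)-(\varphi_*F_1)(e_i)$ is a smooth function of $q$ (the $e_i$ never vanish), a standard bootstrap then shows $f$ is smooth. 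Therefore $\varphi_*(F_1)=F_2-\theta=F_2-\df f$, as claimed (and $F_2-\df f$ is automatically a Finsler metric, being equal to $\varphi_*(F_1)$).

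\emph{Sufficiency.} Conversely, assume $\varphi_*(F_1)=F_2-\df f$ with $f:M_2\to\R$ smooth, so in particular $\varphi$ is a diffeomorphism. For any piecewise smooth $\sigma:[a,b]\to M_2$, integrating gives $\ell_{F_1}(\varphi^{-1}\circ\sigma)=\ell_{\varphi_*F_1}(\sigma)=\ell_{F_2}(\sigma)-f(\sigma(b))+f(\sigma(a))$. Taking the infimum over all such $\sigma$ joining $\varphi(x)$ to $\varphi(y)$ — equivalently, over all curves $\gamma=\varphi^{-1}\circ\sigma$ joining $x$ to $y$, the correspondence being bijective since $\varphi$ is a diffeomorphism — yields $d_1(x,y)=d_2(\varphi(x),\varphi(y))-f(\varphi(y))+f(\varphi(x))$, i.e.\ \eqref{eq:defquasiisom} holds. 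Hence $\varphi$ is an almost isometry by Proposition~\ref{defequiv}.
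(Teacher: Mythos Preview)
Your argument is correct and reaches the same conclusion as the paper, but the route to the smoothness of $f$ is genuinely different. The paper establishes smoothness of $f$ first, and does so by exploiting the local smoothness of the Finsler distance function: fixing a nearby base point $x_0$ and writing $f(y)=f(x_0)-d_{F_1}(\varphi^{-1}(x_0),\varphi^{-1}(y))+d_{F_2}(x_0,y)$, one sees that $f$ is smooth at $x$ because both distance functions are smooth there. Only after knowing $f\in C^\infty$ does the paper compare $\ell_{\varphi_*(F_1)}$ with $\ell_{F_2-\df f}$ along curves to conclude $\varphi_*(F_1)=F_2-\df f$. You instead start from the integrated identity $\int_a^b\theta(\dot\sigma)=f(\sigma(b))-f(\sigma(a))$ with $f$ merely continuous, and use coordinate segments to extract the partials of $f$ directly from $\theta$, then bootstrap. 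Your approach is more self-contained (it does not invoke regularity of the Finsler distance), while the paper's is shorter once that standard fact is granted. One small point worth making explicit in your write-up: after showing $f\in C^1$ via the coordinate directions, the equality $\df f_q(w)=\theta_q(w)$ for a general $w$ follows because the same integral identity along $t\mapsto q+tw$ gives the directional derivative of $f$ in direction $w$ as $\theta_q(w)$, and a $C^1$ function has its directional derivatives given by $\df f$; without that sentence, the passage from ``$\partial_i f=\theta(e_i)$'' to ``$\df f=\theta$'' reads as if linearity of $\theta$ were being assumed.
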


\begin{proof}
By Lemma \ref{thm:diffalmostisomFinsler}, $\varphi$ is smooth. To see that $f$ is smooth, given a point $x\in M_2$, consider neighborhoods $U$ of $x$ and $V$ of $\varphi^{-1}(x)$, where the functions  $d_{F_2}(\,\cdot\,,x)$ and $d_{F_1}(\,\cdot\,,\varphi^{-1}(x))$ are smooth in $U\setminus \{x\}$ and $V\setminus \{\varphi^{-1}(x)\}$ respectively. Then choose $x_0\in U\cap \varphi(V)$, such that $x_0\not=x$. Therefore
\[f(y)=f(x_0)-d_1(\varphi^{-1}(x_0),\varphi^{-1}(y))+d_2(x_0,y)\]
for every $y\in M$ and the expression on the right side is smooth
for every $y\in U\cap \varphi(V)\setminus \{x_0\}$ and in particular for $x$.

To show the implication to the right, observe that \begin{equation}\label{dFlength}\ell_{F_2-\df f}(\alpha)=\ell_{F_2}(\alpha)+f(\alpha(a))-f(\alpha(b)),
\end{equation}
where $\alpha:[a,b]\rightarrow M_2$ is a piecewise smooth curve. Then the above equality  and \eqref{F_1F_2} imply that
\[\ell_{\varphi_*(F_1)}(\varphi\circ\gamma)=\ell_{F_1}(\gamma)=\ell_{F_2}(\varphi\circ\gamma)+f(\varphi\circ\gamma(a))-f(\varphi\circ\gamma(b))=
\ell_{F_2-\df f}(\varphi\circ\gamma),\]
for any curve $\gamma:[a,b]\subseteq\R\rightarrow M_1$.  Therefore $\varphi_*(F_1)$ and $F_2-\df f$ coincide.

The converse follows straightforward from \eqref{dFlength}.
\end{proof}

\begin{prop}\label{isoextended}
Let $(M,F)$ be a Finsler manifold. Then the extended isometry group $\widetilde{\rm Iso}(M,F)$ is a closed
subgroup of ${\rm Iso}(M,\hat{F})$, where $\hat{F}$ is given in \eqref{symmetrized}. In particular, $\widetilde{\rm Iso}(M,F)$ is a Lie group.
\end{prop}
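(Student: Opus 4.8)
The plan is to realize $\widetilde{\mathrm{Iso}}(M,F)$ as a closed subgroup of a Lie group and then invoke Cartan's closed subgroup theorem. By its very definition \eqref{symmetrized}, $\hat F$ is a reversible (indeed strongly convex) Finsler metric, so the Myers--Steenrod theorem for Finsler manifolds (see \cite{DengHou02}) applies: $\mathrm{Iso}(M,\hat F)$ is a Lie group, and its Lie group topology coincides with the compact-open topology on it. By Lemma~\ref{thm:diffalmostisomFinsler}, every $\varphi\in\widetilde{\mathrm{Iso}}(M,F)$ is a (smooth) isometry of $(M,\hat F)$, and since composition and inversion of almost isometries yield almost isometries, the inclusion $\widetilde{\mathrm{Iso}}(M,F)\subseteq\mathrm{Iso}(M,\hat F)$ is that of a subgroup. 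The compact-open topology placed on $\widetilde{\mathrm{Iso}}(M,F)$ in Proposition~\ref{fisometriesLie} is precisely the subspace topology inherited from $\mathrm{Iso}(M,\hat F)$, so it is enough to show that $\widetilde{\mathrm{Iso}}(M,F)$ is a closed subset of $\mathrm{Iso}(M,\hat F)$.

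The heart of the matter is this closedness, and it is where the triangular function does the work. For each fixed triple $(x,y,z)\in M\times M\times M$, the evaluation maps $\psi\mapsto\psi(x)$, $\psi\mapsto\psi(y)$, $\psi\mapsto\psi(z)$ from $\mathrm{Iso}(M,\hat F)$ to $M$ are continuous for the compact-open topology, and the triangular function $T_F\colon M\times M\times M\to[0,+\infty)$ is continuous, since the quasi-metric $d_F$ of the continuous Finsler metric $F$ is continuous for the manifold topology. Hence
\[
\mathrm{Iso}(M,\hat F)\ni\psi\longmapsto T_F\big(\psi(x),\psi(y),\psi(z)\big)\in[0,+\infty)
\]
is continuous, so
\[
\widetilde{\mathrm{Iso}}(M,F)=\bigcap_{(x,y,z)\in M^3}\big\{\psi\in\mathrm{Iso}(M,\hat F):T_F\big(\psi(x),\psi(y),\psi(z)\big)=T_F(x,y,z)\big\}
\]
is an intersection of closed subsets of $\mathrm{Iso}(M,\hat F)$, and therefore closed. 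The displayed set equality uses Lemma~\ref{thm:diffalmostisomFinsler} for the inclusion $\subseteq$ and, for $\supseteq$, the fact that every $\psi\in\mathrm{Iso}(M,\hat F)$ is already a bijection of $M$, so that preserving $T_F$ is exactly the condition of being an almost isometry of $(M,F)$.

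Finally, Cartan's closed subgroup theorem, applied to the closed subgroup $\widetilde{\mathrm{Iso}}(M,F)\subseteq\mathrm{Iso}(M,\hat F)$, shows that $\widetilde{\mathrm{Iso}}(M,F)$ is an embedded Lie subgroup, in particular a Lie group.

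I do not expect a real obstacle here. The two points worth stating carefully are the compatibility of the two ``compact-open'' topologies --- needed so that topological closedness in $\mathrm{Iso}(M,\hat F)$ is exactly the hypothesis of the closed subgroup theorem --- and the observation that the membership condition ``preserves $T_F$'' already characterizes $\widetilde{\mathrm{Iso}}(M,F)$ inside $\mathrm{Iso}(M,\hat F)$, which holds because elements of $\mathrm{Iso}(M,\hat F)$ are automatically bijective.
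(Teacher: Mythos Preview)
Your proof is correct and follows essentially the same approach as the paper: establish the inclusion $\widetilde{\mathrm{Iso}}(M,F)\subseteq\mathrm{Iso}(M,\hat F)$, verify closedness using the continuity of the triangular function, and conclude the Lie group structure. The only cosmetic difference is that the paper obtains the Lie group structure of $\mathrm{Iso}(M,\hat F)$ by realizing it as a closed subgroup of $\mathrm{Iso}(M,h)$ for the average Riemannian metric $h$, whereas you invoke \cite{DengHou02} directly; your version also spells out the closedness argument more explicitly as an intersection of closed sets, which the paper leaves to the reader.
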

\begin{proof}
Let us show first that $\widetilde{\rm Iso}(M,F)\subset{\rm Iso}(M,\hat{F})$.
Let $\varphi$ be an almost isometry. Then by Proposition \ref{diff_fisometries}
we know that
 $\varphi$ is an isometry of  $(M,\hat{F})$. Moreover, we know that ${\rm Iso}(M,\hat{F})$
is a Lie group since it is a closed subgroup of ${\rm Iso}(M,h)$, where
$h$ is the average Riemannian metric of $\hat{F}$. The closedness of $\widetilde{\rm Iso}(M,F)$ in ${\rm Iso}(M,\hat{F})$ follows from the continuity of the triangular function.
\end{proof}
Standard examples of non-reversible Finsler metrics are the so-called
Randers metrics. Precisely, a Randers  metric $R$ on a manifold $M$ is a metric defined as
\begin{equation}\label{randers}
R(v)=\sqrt{h(v,v)}+\omega(v),
\end{equation}
for  $v\in TM$ with $h$  a Riemannian metric and $\omega$ a one-form with $h$-norm less than one at every point $x\in M$.
\begin{cor}\label{hisometry}
Let $(M,R)$ be a Randers manifold with $R$ as in \eqref{randers} and $\varphi:M\rightarrow M$ an almost isometry for $R$. Then $\varphi$ is an isometry for $h$.
\end{cor}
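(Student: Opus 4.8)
The plan is to reduce everything to Lemma~\ref{thm:diffalmostisomFinsler} by computing the symmetrized Finsler metric of a Randers metric. First I would apply Lemma~\ref{thm:diffalmostisomFinsler} to the almost isometry $\varphi:(M,R)\to(M,R)$: it gives immediately that $\varphi$ is an isometry of the symmetrized Finsler metric $\hat R$ defined in \eqref{symmetrized}, and in particular that $\varphi$ is smooth.

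The only computation required is to identify $\hat R$. Since $\omega$ is a one-form, it is fiberwise linear, so $\omega(-v)=-\omega(v)$ and hence
\[
\hat R(v)=\tfrac12\big(R(v)+R(-v)\big)=\tfrac12\Big(\sqrt{h(v,v)}+\omega(v)+\sqrt{h(v,v)}-\omega(v)\Big)=\sqrt{h(v,v)}
\]
for every $v\in TM$. Thus $\hat R$ is precisely the Finsler metric induced by the Riemannian metric $h$, and its associated distance $d_{\hat R}$ is the Riemannian distance $d_h$.

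Finally I would conclude as follows: $\varphi$ is an isometry of $(M,\hat R)$, i.e. it preserves $d_h$; being in addition a diffeomorphism (by Lemma~\ref{thm:diffalmostisomFinsler}), it is therefore an isometry of the Riemannian manifold $(M,h)$, for instance by the Myers--Steenrod theorem \cite[Theorem 8]{MySt39} already invoked above, or simply because a diffeomorphism preserving the length functional $\ell_{\hat R}=\ell_h$ preserves $h$ itself. There is essentially no obstacle here: the substantive work is entirely contained in Lemma~\ref{thm:diffalmostisomFinsler}, and what remains is the elementary observation that symmetrizing a Randers metric cancels the one-form together with the standard identification of $\hat R$-isometries with $h$-isometries.
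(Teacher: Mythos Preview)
Your proof is correct and follows the same approach as the paper: compute $\hat R(v)=\sqrt{h(v,v)}$ and use that almost isometries are isometries of the symmetrized metric. The only cosmetic difference is that the paper cites Proposition~\ref{isoextended} (which packages the inclusion $\widetilde{\mathrm{Iso}}(M,R)\subset\mathrm{Iso}(M,\hat R)$) while you invoke Lemma~\ref{thm:diffalmostisomFinsler} directly; the content is identical.
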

\begin{proof}
Just observe that the symmetrized Finsler metric of $R$ is given by $\hat{R}(v)=\sqrt{h(v,v)}$
for $v\in TM$. The result follows then from Proposition~\ref{isoextended}.
\end{proof}
\begin{rem} 
Let us observe that almost isometries of Finsler metrics are projective transformations, since they preserve geodesics up to parametrization (see Corollary \ref{pregeo}). In particular, they are related with the class of special projective transformation (see \cite[Theorem 1]{Raf13}). Moreover, in \cite{Mat12}, the author classifies Finsler metrics with extended isometry group of dimension greater than Wang's number $n(n-1)/2+1$.
\end{rem}

\section{Conformal maps in distinguishing conformastationary spacetimes}\label{stationary}
 For the basic notions on Lorentzian geometry we refer the reader to \cite{BEE96,oneill83}.
 Let $(M,g)$ be a distinguishing spacetime.
 We say that a vector field $K$ in $M$ is conformal if $\mathcal L_Kg=\lambda g$ for some smooth function $\lambda: M\rightarrow \R$, where $\mathcal L$ is the Lie derivative.
Let us denote by ${\rm CTF}(M,g)$ the subset of complete timelike conformal  vector fields of $(M,g)$, which is a subset of the vector space of conformal fields of $(M,g)$,
denoted  by ${\rm CF}(M,g)$. Observe that for any $K\in {\rm CTF}(M,g)$, since
$(M,g)$ is distinguishing, the main theorem in \cite{JaSan08} allows one to express $(M,g)$ as a standard conformastationary spacetime with respect to $K$, namely, a splitting $(S\times\R,g^{K})$ such that $g^K$ is expressed as
\begin{equation}\label{e1} g^K((v,\tau),(v,\tau))=\Omega^K( g^K_0(v,v)+2 \omega^K(v) \tau-\tau^2),
\end{equation}
for $(v,\tau)\in TS\times \R$, where $\Omega^K$ is a positive smooth real function
in $S\times \R$ and $g^K_0$ and $\omega^K$ are respectively  a Riemannian metric and a one-form on $S$. Here, the timelike conformal vector field $K$ is $\partial_t$ (the partial in the second variable). When ${\rm CTF}(M,g)$ is not empty,  we say that $(M,g)$ is {\it conformastationary} and given $K\in {\rm CTF}(M,g)$, $(S\times\R,g^K)$ is a {\it $K$-conformastationary decomposition} or $K$-splitting for short.
  Assume that the spacetime $(M,g)$ is time-oriented by $K$.
By a well-known Fermat's principle for conformastationary manifolds, (see for instance the review \cite{Jav12a} and references therein), it turns out that the (future-pointing) lightlike geodesics of the conformastationary spacetime have a natural
correspondence with the geodesics of a Finsler metric $F^K$ of Randers type in $S$, defined by
\begin{equation}\label{fermatmetric}
F^K(v)=\sqrt{g^K_0(v,v)+\omega^K(v)^2}+\,\omega^K(v),
\end{equation}
for any $v\in TS$, which we call the \emph{Fermat metric} associated to the conformastationary metric \eqref{e1}.
More precisely, the projections in $S$ of future-pointing lightlike geodesics \[[0,1]\ni s\longmapsto (x(s),t(s))\in S\times \R\] are Randers pregeodesics in $(S,F^K)$, and the length of the Randers geodesic equals $t(1)-t(0)$. Moreover, the causality of the conformastationary spacetime can be characterized in terms of Hopf-Rinow properties of the Randers metric (see \cite[Theorem 4.3]{CJS11}).
\smallskip

 Given $K\in {\rm CTF}(M,g)$, there are several $K$-conformastationary decompositions of $(M,g)$, but all the Fermat metrics of them are in the same class as we show below.
\begin{defi}
Given two Finsler metrics $F_1$ and $F_2$ in a manifold $S$, we say
that they are projectively related if there exists a smooth function $f:S\rightarrow \R$ such that $F_1=F_2+{\rm d}f$. In this case, we will say that they belong to the same class or in notation $[F_1]=[F_2]$.
\end{defi}
Observe that if $F_1$ is a Randers metric, then all the Finsler metrics in the same class are also Randers. Let us denote by $\mathrm{RC}(S)$ the subset of classes of Randers metrics in the manifold $S$.  
\begin{lemma}\label{Fermatclass}
Given  $K\in\mathrm{CTF}(M,g)$,
all the Fermat metrics associated to the $K$-splittings of $(M,g)$ belong to the same class, which will be denoted by $i(K)$. Moreover, there is a one-to-one correspondence between $K$-splittings of $(M,g)$ and representatives of the class $i(K)\in \mathrm{RC}(S)$.
\end{lemma}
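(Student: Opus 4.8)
The plan is to show that any two $K$-splittings of $(M,g)$ differ by a ``vertical'' diffeomorphism of the product $S\times\R$, and then to read off directly how the Fermat metric transforms under such a change. First I would note that, since at least one $K$-splitting $\Psi_0:M\to S\times\R$ exists (so that $(\Psi_0)_*K=\partial_t$; this is the content of the structure theorem of \cite{JaSan08} invoked above, and it already uses completeness of $K$), the flow of $K$ is conjugate to the $\R$-translation flow on the $\R$-factor, so $M/K$ is a manifold and every $S$ occurring in a $K$-splitting may be identified with it. Given two $K$-splittings $\Psi_1,\Psi_2:M\to S\times\R$, the transition $\Theta:=\Psi_2\circ\Psi_1^{-1}$ is a diffeomorphism of $S\times\R$ with $\Theta_*\partial_t=\partial_t$; writing $\Theta(x,t)=(A(x,t),B(x,t))$, the condition $\Theta_*\partial_t=\partial_t$ forces $\partial_tA\equiv 0$ and $\partial_tB\equiv 1$, hence $A=A(x)$ and $B(x,t)=t+f(x)$ for some $f\in C^\infty(S)$, and after the identification above $A=\mathrm{id}_S$. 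Thus $\Theta(x,t)=(x,t+f(x))$, and $\Theta$ is an isometry between the two splittings carrying $K$ to $K$.

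For the first assertion I would use that the future lightlike cone of $g$ is an intrinsic object. In a $K$-splitting the null condition $g_0^K(v,v)+2\omega^K(v)\tau-\tau^2=0$ has future root $\tau=\omega^K(v)+\sqrt{\omega^K(v)^2+g_0^K(v,v)}=F^K(v)$ by \eqref{fermatmetric}, so the future null cone is exactly the graph over $TS$ of the Fermat metric. Since $\Theta$ preserves the time orientation (both splittings being oriented by $\partial_t=K$), pushing the future null vector $(v,F_1^K(v))$ forward by $\Theta$ gives $(v,F_1^K(v)+\df f(v))$, which must be the future null vector of the second splitting over $v$, i.e. $F_2^K(v)=F_1^K(v)+\df f(v)$. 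Hence $F_2^K=F_1^K+\df f$, so $[F_1^K]=[F_2^K]$, and this common class is the desired $i(K)$. (Equivalently, the identity of $S$ is then an almost isometry $(S,F_1^K)\to(S,F_2^K)$ and one may invoke Proposition~\ref{diff_fisometries}.)

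Finally, for the one-to-one correspondence I would fix a reference $K$-splitting with Fermat metric $F_0^K$ and consider the assignment $\Psi\mapsto F^\Psi$, which by the above takes values among the representatives of $i(K)$. It is onto: any representative of $i(K)$ is of the form $F_0^K+\df f$ for some $f\in C^\infty(S)$ by definition of the class, and applying $(x,t)\mapsto(x,t+f(x))$ to the reference splitting produces a $K$-splitting whose Fermat metric is $F_0^K+\df f$. It is injective: if two $K$-splittings have the same Fermat metric, the function $f$ relating them satisfies $\df f=0$, so $f$ is constant, and the two splittings differ only by a time translation $(x,t)\mapsto(x,t+c)$, hence are regarded as the same $K$-splitting. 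I expect the step requiring the most care to be the first one — arguing that the transition between two $K$-splittings is vertical of the form $(x,t)\mapsto(x,t+f(x))$, which rests on the existence of a $K$-splitting and on completeness of $K$ so that the $K$-flow is an honest $\R$-action with smooth quotient — together with the bookkeeping of the additive constant in $f$ in the injectivity part.
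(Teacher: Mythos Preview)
Your proof is correct and follows essentially the same line as the paper's: parameterize the other $K$-splittings by a smooth function $f$ on $S$ and show the Fermat metric changes by $\df f$. The only notable differences are expository: the paper phrases the parameterization via spacelike sections $S^f=\{(x,f(x))\}$ and then simply cites \cite[Proposition~5.8]{CJS11} for the fact that $S^f$ is spacelike iff $F-\df f$ is positive and that the new Fermat metric is $F-\df f$, whereas you derive the transition form $(x,t)\mapsto(x,t+f(x))$ from $\Theta_*\partial_t=\partial_t$ and compute the change of Fermat metric directly from the future null cone; you are also more explicit than the paper about the additive constant in $f$ when discussing injectivity.
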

\begin{proof}
If we fix a $K$-splitting $(S\times \R,g^K)$, then any other $K$-splitting is determined by a spacelike section of $S\times\R$, which always can be expressed as
\[S^f=\{(x,f(x))\in S\times\R :x\in S\},\]
where $f:S\rightarrow\R$ is a smooth function. If $F^K$ is the Fermat metric associated to $(S\times \R,g^K)$, it is easy to see that $S^f$ is spacelike if and only if $F-{\rm d}f$ is positive definite (see \cite[Proposition 5.8]{CJS11}) and in this case the Fermat metric associated to the $K$-splitting constructed from the slice $S^f$  is $F-{\rm d}f$.
\end{proof}
Let us observe that if a map $\varphi:(S,F_1)\rightarrow (S,F_2)$ is an almost isometry,  it remains almost isometry when we replace $F_1$ or $F_2$ with a Finsler metric in the same class. Then the subset 
\[\widetilde{\rm Iso}(S,[F_1],[F_2])=\{\varphi:(S,F_1)\rightarrow (S,F_2): \text{$\varphi$ is an almost isometry}\}\]
is well-defined. Furthermore, fixing an almost isometry $\varphi_0:(S,F_1)\rightarrow (S,F_2)$, we get the identifications $\widetilde{\rm Iso}(S,[F_1],[F_2])\cong
\widetilde{\rm Iso}(S,F_1)\cong \widetilde{\rm Iso}(S,F_2)$, and we can think in $\widetilde{\rm Iso}(S,[F_1],[F_2])$ as a manifold.

Let us see  that we can characterize all the conformal maps of a conformastationary spacetime in terms of almost isometries of Fermat metrics.
\begin{thm}\label{fundamental}
Let $\psi:(M,g)\rightarrow (M,g)$ be a conformal map of a conformastationary spacetime $(M,g)$. Let $K\in {\rm CTF}(M,g)$ and denote $W=\psi_*(K)$. Then 
using arbitrary conformastationary decompositions of $K$ and $W$, the map
\[\psi:(S\times\R,g^K)\rightarrow (S\times \R,g^W)\]
 is given by
$\psi(x,t)=(\varphi(x),t+f(x))$, where $\varphi:(S,F^K)\rightarrow (S,F^W)$ is an almost isometry.
\end{thm}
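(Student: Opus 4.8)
The plan is to exploit the Fermat correspondence between future-pointing lightlike geodesics of a conformastationary spacetime and pregeodesics of its Fermat metric, pushing it through the conformal map $\psi$. First I would recall that $\psi$, being conformal, sends lightlike geodesics to lightlike geodesics up to parametrization, and moreover it maps future-pointing curves to future-pointing curves (after possibly composing with a time reversal, which we exclude since $\psi$ is assumed to preserve time-orientation, or rather since $W=\psi_*(K)$ is again future timelike). Since $K$ is complete and timelike and $(M,g)$ is distinguishing, so is $W=\psi_*(K)$; hence by \cite{JaSan08} the spacetime also admits $W$-conformastationary decompositions, and we may fix arbitrary decompositions $(S\times\R,g^K)$ and $(S\times\R,g^W)$ with Fermat metrics $F^K$ and $F^W$ as in \eqref{fermatmetric}.

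Next I would analyze the form of $\psi$ in these coordinates. Because $\psi_*(\partial_t)=\psi_*(K)=W=\partial_{\tilde t}$ (where $\tilde t$ is the time function of the $W$-splitting), $\psi$ intertwines the two flows: $\psi\circ\Phi^K_s=\Phi^W_s\circ\psi$ for all $s$, where $\Phi^K,\Phi^W$ are the flows of $K$ and $W$. Writing $\psi(x,t)=(\psi_1(x,t),\psi_2(x,t))$, this flow-equivariance forces $\psi_1(x,t)=\varphi(x)$ independent of $t$ and $\psi_2(x,t)=t+f(x)$ for some maps $\varphi:S\to S$ and $f:S\to\R$; since $\psi$ is a diffeomorphism, $\varphi$ is a diffeomorphism and $f$ is smooth. (Equivalently, one sees directly that $\psi$ descends to the quotients $M/K\cong S$ and $M/W\cong S$, giving $\varphi$, and the $\R$-component is determined up to the fiber coordinate, giving the $t+f(x)$ form.)

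Then comes the main step: identifying $\varphi$ as an almost isometry. Take any $F^K$-pregeodesic $x:[a,b]\to S$; by Fermat's principle it lifts to a future-pointing lightlike geodesic $s\mapsto(x(s),t(s))$ of $g^K$ with $t(b)-t(a)=\ell_{F^K}(x|_{[a,b]})$. Applying $\psi$, the curve $s\mapsto(\varphi(x(s)),t(s)+f(x(s)))$ is a future-pointing lightlike geodesic of $g^W$ up to parametrization; by Fermat's principle in the $W$-splitting, its spatial projection $\varphi\circ x$ is an $F^W$-pregeodesic and the increment of its time component equals its $F^W$-length, i.e. $\big(t(b)+f(x(b))\big)-\big(t(a)+f(x(a))\big)=\ell_{F^W}(\varphi\circ x)$. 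Combining, $\ell_{F^W}(\varphi\circ x)=\ell_{F^K}(x)+f(x(b))-f(x(a))$ for every $F^K$-pregeodesic $x$. Since $(S,F^K)$ is a length space and any two nearby points are joined by an $F^K$-minimizing geodesic, and $\varphi$ carries such minimizers to $F^W$-pregeodesics, localizing this identity gives $d_{F^W}(\varphi(p),\varphi(q))=d_{F^K}(p,q)+f(q)-f(p)$ first for $p,q$ sufficiently close, hence $\varphi$ is a local almost isometry; alternatively, using that $\varphi$ maps \emph{all} $F^K$-minimizers to $F^W$-minimizers one gets the identity globally. In either case, Proposition~\ref{defequiv} (applied with the function $f\circ\varphi^{-1}$ on $S=X_2$) shows $\varphi:(S,F^K)\to(S,F^W)$ is an almost isometry. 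Finally, by Lemma~\ref{Fermatclass} the conclusion is independent of the chosen $K$- and $W$-decompositions, since changing a decomposition replaces $F^K$ (resp. $F^W$) by a projectively related metric and, as noted before the theorem, this does not affect the almost-isometry property.

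The main obstacle I anticipate is the careful bookkeeping in the Fermat correspondence under $\psi$: one must check that $\psi$ genuinely preserves the \emph{future}-pointing character of lightlike geodesics (this is where $W=\psi_*(K)$ being future timelike, and the spacetime being time-oriented by $K$, enters), and one must track reparametrizations, since a conformal map preserves lightlike geodesics only up to parametrization while Fermat's principle is stated for a specific parametrization tied to the time function. The identity relating the time increment to the Fermat length is parametrization-independent in the right way, but stating this cleanly requires a small lemma; everything else is formal from the flow-equivariance of $\psi$ and Proposition~\ref{defequiv}.
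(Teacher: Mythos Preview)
Your strategy coincides with the paper's: obtain the form $\psi(x,t)=(\varphi(x),t+f(x))$ from flow-equivariance $\psi_*(K)=W$, then use the Fermat correspondence (conformal maps preserve lightlike pregeodesics, whose $S$-projections are Fermat pregeodesics with Fermat length equal to the time increment) to deduce, for every $F^K$-geodesic $\gamma:[a,b]\to S$,
\[
\ell_{F^W}(\varphi\circ\gamma)=\ell_{F^K}(\gamma)+f(\gamma(b))-f(\gamma(a)).
\]

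The only soft spot is your final step. Neither of your two alternatives for passing from this length identity to the almost-isometry conclusion is closed as written. The claim that ``$\varphi$ maps all $F^K$-minimizers to $F^W$-minimizers'' is not justified: you only know $\varphi\circ\gamma$ is an $F^W$-\emph{pregeodesic}, not a minimizer. And the local route yields only a local almost isometry; Proposition~\ref{defequiv} does not upgrade this to a global one, and you cannot invoke Theorem~\ref{thm:propsimpcon} since $S$ need not be simply connected.

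The paper avoids distances altogether at this point. Since the length identity above holds for \emph{every} $F^K$-geodesic, and through each $(p,v)\in TS$ there is such a geodesic, differentiating (or using \eqref{dFlength}) gives the pointwise equality of Finsler metrics
\[
\varphi_*(F^K)=F^W-{\rm d}f
\]
on all of $TS$. Then Proposition~\ref{diff_fisometries} (the converse direction) immediately yields that $\varphi$ is a global almost isometry. Your globally defined $f$ is exactly what makes this work; you were one line away from the clean conclusion.
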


\begin{proof}
First of all, observe that $\psi_*(K)\in {\rm CTF}(M,g)$, because conformal maps preserve causality and a diffeomorphism preserves completeness of vector fields. Then there exist respectively $K$ and $W$-conformastationary decompositions of $(M,g)$ (see the introduction of this section). It is straightforward that $\psi:(S\times\R,g^K)\rightarrow (S\times\R,g^W)$ is given by
$\psi(x,t)=(\varphi(x),t+f(x))$ for certain smooth functions $\varphi:S\rightarrow S$
and $f:S\rightarrow\R$.

In order to see that $\varphi$ is an almost isometry, observe that conformal maps preserve lightlike pregeodesics (see for instance \cite[Lemma 9.17]{BEE96}) and as the projection of lightlike geodesics are Fermat pregeodesics (see \cite[Theorem 4.5]{CJM11}), it follows that $\varphi$ maps Fermat geodesics of $(S,F^K)$ into Fermat pregeodesics of $(S,F^W)$. Moreover,  a geodesic $\gamma:[0,1]\rightarrow S$ of the Fermat metric $F^K$ satisfies that \[\ell_{F^W}(\varphi\circ \gamma)=\ell_{F^K}(\gamma)+f\big(\varphi(\gamma(1))\big)-f\big(\varphi(\gamma(0))\big)\] (use again that the length of the Fermat geodesic is given by the difference of the time component at the endpoints, see \cite[Theorem 4.5]{CJM11}). The above equation together with \eqref{dFlength} implies that
\[\ell_{\varphi_*(F^K)}(\varphi\circ\gamma)=\ell_{F^K}(\gamma)=\ell_{F^W-{\rm d}f}(\varphi\circ\gamma)\]
for any geodesic $\gamma$ of $(S,F^K)$, and then that  $\varphi_*(F^K)(v)=F^W(v)-{\rm d} f (v)$ for every $v\in TS$. Proposition \ref{diff_fisometries} yields that
$\varphi$ is an almost isometry.
\end{proof}

This result allows us to describe the conformal group of the conformastationary spacetime as we will see later.

\subsection{$K$-conformal maps}

 In this subsection we will fix $K\in {\rm CTF}(M,g)$ and we will study a relevant class of transformations, namely, those that preserve the conformal observer determined by $K$. We will also fix a $K$-conformastationary decomposition $(S\times\R,g^K)$ and we will remove the $K's$ in the expressions of \eqref{e1} and \eqref{fermatmetric}.
\begin{defi}\label{def:statdiffeos}
We say that a diffeomorphism $\psi:(M,g)\rightarrow (M,g)$ is
{\em $K$-stationary}  if it
  preserves the conformal vector field $K$, namely, $\psi_*(K)=K$. Moreover, we say that it is {\it $K$-conformal} if it is $K$-stationary and conformal.
\end{defi}
Observe that when $g(K,K)=-1$, that is, the conformastationary spacetime is a normalized stantard stationary spacetime, 
or more generally when $g(K,K)$ is constant,
then every $K$-conformal map is an isometry of $g$. 
\begin{cor}\label{conformal_against_Fermat}
Let $\psi:S\times \R\rightarrow S\times \R$ be a $K$-conformal map of the conformastationary spacetime. Then there exist functions $\varphi:S\rightarrow S$ and
$f:S\rightarrow \R$ such that $\psi(x,t)=\big(\varphi(x),t+f(x)\big)$ and $\varphi$ is an almost isometry for the Fermat metric $F$ of $(S\times \R,g)$ with $f$ satisfying \eqref{eq:defquasiisom} for $d_F$. Moreover, $\varphi$ is a Riemannian isometry for the metric in $S$ given by
\begin{equation}\label{metrica-h}
h(v,v)=g_0(v,v)+\omega(v)^2
\end{equation}
for $v\in TS$.
\end{cor}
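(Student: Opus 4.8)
The plan is to deduce this statement from Theorem \ref{fundamental} together with Corollary \ref{hisometry}, specialized to the case $W = K$. First I would observe that a $K$-conformal map $\psi$ is, in particular, a conformal map of the conformastationary spacetime $(M,g)$ satisfying $\psi_*(K) = K$. Applying Theorem \ref{fundamental} with this choice of $K$ and $W = \psi_*(K) = K$, and using the \emph{same} $K$-conformastationary decomposition $(S\times\mathbb R, g^K)$ on both source and target (which is legitimate since $W = K$), we obtain that $\psi$ has the form $\psi(x,t) = (\varphi(x), t + f(x))$ for smooth $\varphi : S \to S$ and $f : S \to \mathbb R$, and that $\varphi : (S, F^K) \to (S, F^K) = (S,F)$ is an almost isometry. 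Since the function $f$ appearing here is exactly the one produced by Theorem \ref{fundamental}, and the proof of that theorem shows $\varphi_*(F) = F - \mathrm{d}f$, Proposition \ref{diff_fisometries} (or rather its proof, combined with Proposition \ref{defequiv}) gives that $f$ satisfies \eqref{eq:defquasiisom} for $d_F$; here one should check the sign/orientation conventions so that $f$ is the $\varphi$-map in the sense of Proposition \ref{defequiv}, taking $f+c$ if needed.

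For the final assertion, I would note that the Fermat metric $F$ in \eqref{fermatmetric} is a Randers metric of the form $\sqrt{g_0(v,v) + \omega(v)^2} + \omega(v)$, so its symmetrized Finsler metric is $\hat F(v) = \sqrt{g_0(v,v) + \omega(v)^2} = \sqrt{h(v,v)}$ with $h$ exactly the Riemannian metric in \eqref{metrica-h}. Since $\varphi$ is an almost isometry for $F$, Corollary \ref{hisometry} applies directly and yields that $\varphi$ is a Riemannian isometry for $h$.

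The bulk of the work is already contained in Theorem \ref{fundamental}, so the argument here is essentially a specialization; the only genuine point requiring care is the identification $W = K$ allowing the use of a single splitting on both ends — one must recall from Lemma \ref{Fermatclass} that fixing the $K$-splitting fixes a representative $F$ of the class $i(K)$, and that $\psi_*(K) = K$ ensures the target splitting can be chosen to be the same one, so that $F^K = F^W = F$ with no residual exact one-form discrepancy beyond the $f$ already recorded. I expect this bookkeeping — making sure the $f$ from Theorem \ref{fundamental} and the $f$ satisfying \eqref{eq:defquasiisom} are literally the same function (up to the irrelevant additive constant) — to be the only mild obstacle; everything else is an immediate citation.
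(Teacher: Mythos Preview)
Your proposal is correct and follows essentially the same route as the paper, which proves the corollary by a direct citation of Theorem~\ref{fundamental} (specialized to $W=\psi_*(K)=K$) together with Corollary~\ref{hisometry}. The additional bookkeeping you outline---identifying the $f$ from Theorem~\ref{fundamental} with the one in \eqref{eq:defquasiisom} via Proposition~\ref{diff_fisometries}---is handled implicitly in the paper but is indeed the only point requiring a moment's thought, and you have it right.
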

\begin{proof}
It follows immediately from Theorem \ref{fundamental} and Proposition \ref{hisometry}.
\end{proof}
Let us denote by ${\rm Conf}_K(M,g)$ the subset of the Lie group of  conformal maps ${\rm Conf}(M,g)$ consisting of $K$-conformal maps.
\begin{lemma}
 ${\rm Conf}_K(M,g)$ is a $\mathcal C^1$-closed subgroup of ${\rm Conf}(M,g)$;
 in $\mathrm{Conf}_K(M,g)$, all the $\mathcal C^k$-topologies coincide\footnote{It is not clear to the authors whether the same statement,
 i.e., coincidence of all $\mathcal C^k$-topologies, holds in the whole set $\mathrm{Conf}(M,g)$.} for $k=0,1,\ldots,+\infty$.
Moreover, the one-parameter subgroup ${\mathcal K}$ generated by the flow of $K$ is closed and normal in ${\rm Conf}_{K}(M,g)$.
\end{lemma}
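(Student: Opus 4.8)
The plan is to prove the three assertions in turn, using the correspondence between $K$-conformal maps and almost isometries of the Fermat metric established in Theorem~\ref{fundamental} (with $W=\psi_*(K)=K$ by definition of $K$-conformal), together with Corollary~\ref{conformal_against_Fermat} which says that a $K$-conformal map $\psi(x,t)=(\varphi(x),t+f(x))$ has $\varphi$ a Riemannian isometry of the metric $h$ in \eqref{metrica-h} and $f$ a $\varphi$-function for $d_F$. That $\mathrm{Conf}_K(M,g)$ is a \emph{subgroup} is immediate: if $\psi_1,\psi_2$ preserve $K$ then so do $\psi_2\circ\psi_1$ and $\psi_1^{-1}$, and each is conformal since $\mathrm{Conf}(M,g)$ is a group.

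For the $\mathcal C^1$-closedness and the coincidence of all $\mathcal C^k$-topologies, the key point is the rigidity coming from the spatial part. First I would observe that, via the $K$-splitting, every element of $\mathrm{Conf}_K(M,g)$ has the special form $\psi(x,t)=(\varphi(x),t+f(x))$, and by Corollary~\ref{conformal_against_Fermat} the map $\varphi$ lies in $\mathrm{Iso}(S,h)$, a Lie group acting smoothly on $S$; moreover $f$ is determined up to a constant by $\varphi$ via \eqref{eq:defquasiisom} (equivalently $\varphi_*F = F - \df f$), and the constant is the ``vertical translation'' ambiguity. Hence $\mathrm{Conf}_K(M,g)$ is parametrized by a subset of $\mathrm{Iso}(S,h)\times\R$. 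Now if $\psi_n\to\psi$ in the $\mathcal C^1$ topology, then $\varphi_n\to\varphi$ and $f_n\to f$ in $\mathcal C^1$; since $\mathrm{Iso}(S,h)$ is closed in $\mathcal C^1$ (isometries of a Riemannian metric form a closed Lie subgroup and convergence of isometries in $\mathcal C^0$ already forces $\mathcal C^\infty$ convergence, by Myers--Steenrod, cf.\ the use of \cite[Theorem 8]{MySt39} in Lemma~\ref{thm:diffalmostisomFinsler}), the limit $\varphi$ is a smooth $h$-isometry, and the relation $\varphi_*F=F-\df f$ passes to the limit and forces $f$ to be smooth (as in the proof of Proposition~\ref{diff_fisometries}); thus $\psi\in\mathrm{Conf}_K(M,g)$ and it is automatically $\mathcal C^\infty$. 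The same Myers--Steenrod rigidity for $\varphi$, together with smoothness of $f$ given $\varphi$, shows that $\mathcal C^0$-convergence of a sequence in $\mathrm{Conf}_K(M,g)$ implies $\mathcal C^\infty$-convergence, which is precisely the coincidence of all $\mathcal C^k$-topologies on $\mathrm{Conf}_K(M,g)$.

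For the last assertion, the flow of $K$ in the $K$-splitting is exactly $\Phi_s(x,t)=(x,t+s)$, so the one-parameter group $\mathcal K=\{\Phi_s : s\in\R\}$ consists of $K$-conformal maps (these are even isometries of $g^K$ since they preserve $\Omega^K, g^K_0, \omega^K$), i.e.\ $\mathcal K\subseteq \mathrm{Conf}_K(M,g)$. Closedness of $\mathcal K$: using the identification above, $\Phi_s$ corresponds to $(\varphi,c)=(\mathrm{id}_S, s)$, so $\mathcal K$ is the ``fiber over the identity'' $\{\mathrm{id}_S\}\times\R$, which is closed in the image of $\mathrm{Conf}_K(M,g)$ inside $\mathrm{Iso}(S,h)\times\R$; alternatively, if $\Phi_{s_n}\to\psi$ in $\mathcal C^0$ then the spatial parts converge to $\mathrm{id}_S$ and the vertical shifts $s_n$ converge to some $s\in\R$, so $\psi=\Phi_s$. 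Normality: for any $\psi\in\mathrm{Conf}_K(M,g)$ we have $\psi_*K=K$, hence $\psi$ conjugates the flow of $K$ to itself, $\psi\circ\Phi_s\circ\psi^{-1}=\Phi_s$; in the explicit coordinates, if $\psi(x,t)=(\varphi(x),t+f(x))$ then $\psi\circ\Phi_s\circ\psi^{-1}(x,t)=(x,t+s)=\Phi_s(x,t)$, so $\mathcal K$ is not only normal but central in $\mathrm{Conf}_K(M,g)$.

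\textbf{Main obstacle.} The routine group-theoretic facts (subgroup, normality, central character of $\mathcal K$) are immediate from the explicit coordinate form; the substantive point, and the one requiring care, is the coincidence of the $\mathcal C^k$-topologies and the $\mathcal C^1$-closedness. This hinges on upgrading weak (e.g. $\mathcal C^0$ or $\mathcal C^1$) convergence of the pairs $(\varphi_n,f_n)$ to $\mathcal C^\infty$ convergence; the hard part is making sure that (i) Myers--Steenrod-type rigidity is legitimately available for the spatial isometries of $h$, and (ii) the determination of $f$ from $\varphi$ through $\varphi_*F = F-\df f$ is genuinely $\mathcal C^\infty$-continuous in $\varphi$ (this is exactly the local smoothness argument in Proposition~\ref{diff_fisometries}, which must be checked to be locally uniform so as to give the topological, not merely pointwise, statement), and that the footnoted caveat — this need not work on all of $\mathrm{Conf}(M,g)$, where the spatial part need not be an isometry of a fixed metric — is respected.
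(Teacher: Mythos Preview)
Your argument is correct, and for the subgroup property, normality (in fact centrality) of $\mathcal K$, and closedness of $\mathcal K$ it matches the paper essentially verbatim. For $\mathcal C^1$-closedness the paper's argument is actually shorter than yours: it simply notes that the defining condition $\psi_*(K)=K$ involves only the first differential of $\psi$, so the subset it cuts out of $\mathrm{Conf}(M,g)$ is $\mathcal C^1$-closed.

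The genuine difference is in the coincidence of all $\mathcal C^k$-topologies. The paper does \emph{not} go through the $K$-splitting here; instead it introduces the auxiliary Riemannian metric on the whole spacetime
\[
g_R(v,w)=g(v,w)-2\,\frac{g(v,K)\,g(w,K)}{g(K,K)},
\]
checks that $\mathrm{Conf}_K(M,g)\subseteq\mathrm{Conf}(M,g_R)$, and then invokes Ferrand's theorem: $\mathrm{Conf}(M,g_R)$ is either contained in the isometry group of some Riemannian metric, or $(M,[g_R])$ is the standard conformal sphere or Euclidean space; in each case the $\mathcal C^k$-topologies coincide on $\mathrm{Conf}(M,g_R)$ and this is inherited by the subgroup. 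Your route via the splitting and Myers--Steenrod on $\mathrm{Iso}(S,h)$ is more elementary (it avoids Ferrand) and is closer in spirit to the Fermat-metric machinery of the paper, but the price is that you must control the $f$-part separately. The clean way to close the ``main obstacle'' you flag is to exploit the Randers structure: since each $\varphi_n$ is an $h$-isometry, the relation $\varphi_{n*}F=F-\df f_n$ reduces to $\df f_n=\omega-(\varphi_n^{-1})^*\omega$, so $\mathcal C^\infty$-convergence of $\varphi_n$ (from Myers--Steenrod) immediately yields $\mathcal C^\infty$-convergence of $\df f_n$, and together with $\mathcal C^0$-convergence of $f_n$ this gives $\mathcal C^\infty$-convergence of $f_n$, hence of $\psi_n$.
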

\begin{proof}
Let us show that all $C^k$-topologies coincide in $\mathrm{Conf}_K(M,g)$. Let $g_R$ be the Riemannian metric in
$M$ given as
\[g_R(v,w)=g(v,w)-2\frac{g(v,K)g(w,K)}{g(K,K)}\]
for any $v,w\in TM$. Then $\mathrm{Conf}_K(M,g)\subseteq\mathrm{Conf}(M,g_R)$. Observe that
$\mathrm{Conf}(M,g_R)$ is contained in the isometry group of a Riemannian metric $h$ or it is the conformal group of the round sphere or
of the Euclidean space $\R^n$ (see \cite{Fer96}). In the three cases, the assertion about the equivalence of the $\mathcal C^k$-topologies, for $k=0,1,\ldots,\infty$ is true and it is inherited by $\mathrm{Conf}_K(M,g)$.

Closedness in the $\mathcal C^1$-topology is obvious from the very definition, since the identity $\psi_*(K)=K$
involves only the first differential of $\psi$, see Definition~\ref{def:statdiffeos}.
Let $\psi \in \mathrm{Conf}_K(M, g)$ be given by $\psi(x,t) = (\phi(x),t + f(x))$ for some smooth
 $f:M\rightarrow\R$ and $\phi \in \widetilde{\rm Iso}(S,F)$. Then, its inverse is given by
$$\psi^{-1}(x,t) = \big(\phi^{-1}(x),t - f(\phi^{-1}(x))\big)$$
Now, let $K^{T} : M \rightarrow M$ be the time $T$  of the  flow of $K$, i.e., $K^{T}(x,t) = (x,t + T)$. It is easy to see that $\psi \circ K^{T} \circ \psi^{-1} = K^{T}$. Thus $\mathcal{K}$ is normal in $\mathrm{Conf}_K(M, g)$.

Let us finally show that $\mathcal{K}$ is closed. Take a sequence $\{\varphi_n\}_{n\in \N}$ in $\mathcal{K}$ converging to $\varphi\in \mathrm{Conf}_K(M, g)$ such that $\varphi_n=K^{T_n}$. Then the sequence $\{T_n\}_{n\in\N}$ must be bounded, otherwise $\varphi_n$ would not have pointwise convergence.  Therefore there exists a subsequence such that $T_{n_k}\rightarrow T\in \R$ and $\varphi=K^T$.
\end{proof}

\begin{prop}\label{bijection}
Let $M=S\times \R$ be endowed with the conformastationary Lorentz metric \eqref{e1}, set $K=\partial_t$ and let $F$ denote the Fermat metric on $S$
given in \eqref{fermatmetric}.  \hfill\break
The map $\pi: {\rm Conf}_K(M,g)\rightarrow \widetilde{\rm Iso}(S,F)$ defined as
$\pi(\psi)=\varphi$ (see Corollary~\ref{conformal_against_Fermat}) is a Lie group homomorphism. Moreover, $\pi$ can be projected to the quotient
\[\bar{\pi}: {\rm Conf}_K(M,g)/{\mathcal K}\rightarrow \widetilde{\rm Iso}(S,F)\]
 and gives an isomorphism of Lie groups.
\end{prop}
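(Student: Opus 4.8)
The plan is to establish that $\pi$ is a well-defined group homomorphism, compute its kernel, check surjectivity, and then upgrade the resulting abstract isomorphism $\bar\pi$ to an isomorphism of Lie groups using the fact that both sides are Lie groups (by Proposition~\ref{isoextended} and the preceding lemma) together with a closed-graph/continuity argument.

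First I would check that $\pi$ is a homomorphism. By Corollary~\ref{conformal_against_Fermat}, every $\psi\in{\rm Conf}_K(M,g)$ has the form $\psi(x,t)=(\varphi(x),t+f(x))$ with $\varphi\in\widetilde{\rm Iso}(S,F)$; the map $\varphi=\pi(\psi)$ is uniquely determined by $\psi$ (it is literally the first-coordinate component), so $\pi$ is well defined as a set map. Composing $\psi_1(x,t)=(\varphi_1(x),t+f_1(x))$ with $\psi_2(x,t)=(\varphi_2(x),t+f_2(x))$ gives $(\psi_2\circ\psi_1)(x,t)=(\varphi_2(\varphi_1(x)),\,t+f_1(x)+f_2(\varphi_1(x)))$, whose first component is $\varphi_2\circ\varphi_1$; hence $\pi(\psi_2\circ\psi_1)=\pi(\psi_2)\circ\pi(\psi_1)$, and $\pi$ is a group homomorphism. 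Continuity of $\pi$ follows because the $\mathcal C^1$-topology on ${\rm Conf}_K(M,g)$ controls the first-coordinate component $\varphi$ and its first derivatives, while $\widetilde{\rm Iso}(S,F)$ carries (by Proposition~\ref{fisometriesLie} and Proposition~\ref{isoextended}) the compact-open topology, which is coarser; a continuous homomorphism between Lie groups is automatically smooth, so $\pi$ is a Lie group homomorphism.

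Next I would identify $\ker\pi$. If $\pi(\psi)=\mathrm{id}_S$, then $\psi(x,t)=(x,t+f(x))$, and I must show $f$ is constant and equal to some $T\in\R$, so that $\psi=K^T\in\mathcal K$. Since $\varphi=\mathrm{id}$ is an almost isometry realized by the function $f$ (satisfying \eqref{eq:defquasiisom} for $d_F$ as in Corollary~\ref{conformal_against_Fermat}), and the identity is trivially an almost isometry realized by the constant function $0$, the uniqueness of the function up to an additive constant (the remark after Proposition~\ref{defequiv}) forces $f$ to be constant; alternatively one can argue directly from \eqref{eq:defquasiisom}: $d_F(x,y)=d_F(x,y)+f(y)-f(x)$ for all $x,y$ gives $f(y)=f(x)$. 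Conversely every $K^T$ lies in $\ker\pi$. Thus $\ker\pi=\mathcal K$, and $\pi$ descends to an injective homomorphism $\bar\pi:{\rm Conf}_K(M,g)/\mathcal K\to\widetilde{\rm Iso}(S,F)$.

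Finally I would prove surjectivity of $\bar\pi$ and that $\bar\pi$ is an isomorphism of Lie groups. Given $\varphi\in\widetilde{\rm Iso}(S,F)$, by Proposition~\ref{diff_fisometries} there is a smooth $f:S\to\R$ with $\varphi_*(F)=F-\df f$; this is precisely the condition (see \cite[Proposition 5.8]{CJS11}, as used in the proof of Lemma~\ref{Fermatclass}) under which $\psi(x,t)=(\varphi(x),t+f(x))$ defines a diffeomorphism of $S\times\R$, and one checks that $\psi^*g^K$ is pointwise conformal to $g^K$ using that $F-\df f$ is the Fermat metric of the $K$-splitting built from the slice $S^f$ — so $\psi$ is a $K$-conformal map with $\pi(\psi)=\varphi$. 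Hence $\bar\pi$ is a continuous bijective homomorphism of Lie groups. To conclude that $\bar\pi$ is a Lie group isomorphism it suffices to show its inverse is continuous; this follows because $\mathcal C^k$-topologies coincide on ${\rm Conf}_K(M,g)$ (preceding lemma), so it is enough to check continuity into the $\mathcal C^0$-topology, and $\varphi\mapsto\psi$ is continuous in $\mathcal C^0$ once one knows the associated $f$ depends continuously on $\varphi$ — which it does, since $f$ is recovered from $\varphi$ by the explicit formula $f(y)=f(x_0)-d_1(\varphi^{-1}(x_0),\varphi^{-1}(y))+d_2(x_0,y)$ appearing in the proof of Proposition~\ref{diff_fisometries}, normalized at a fixed point. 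Alternatively, and more cleanly, one invokes the automatic-continuity theorem: a continuous bijective homomorphism of Lie groups with countably many components is an isomorphism. The main obstacle I anticipate is the verification that the $\psi$ built from an arbitrary $\varphi\in\widetilde{\rm Iso}(S,F)$ is genuinely conformal (not merely that it preserves Fermat pregeodesics) — one needs to trace through the Fermat/conformastationary correspondence of \cite{CJS11,CJM11} and the explicit form \eqref{e1}, \eqref{fermatmetric} of $g^K$ and $F^K$ to see that $\psi^*g^K=\Omega\cdot g^K$ for a suitable positive $\Omega$, rather than just a fiberwise projective relation; the bookkeeping with the conformal factor $\Omega^K$ and the change of slice is where the argument requires care.
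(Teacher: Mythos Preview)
Your overall architecture matches the paper's: verify $\pi$ is a homomorphism (immediate from Corollary~\ref{conformal_against_Fermat}), show $\ker\pi=\mathcal K$ via the uniqueness of $f$ up to an additive constant, prove surjectivity by lifting an almost isometry $\varphi$ to $\psi(x,t)=(\varphi(x),t+f(x))$, and then use automatic smoothness of continuous homomorphisms between Lie groups to upgrade $\bar\pi$ to a Lie isomorphism (the paper cites \cite[Corollary~1.10.9 and Proposition~1.11.8]{DuKo00} for exactly this step, which is your ``alternative'' route and is indeed the cleaner one).

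The one place where you leave a genuine gap is precisely the one you flag: showing that the lift $\psi$ is conformal. You propose checking $\psi^*g^K=\Omega\cdot g^K$ by tracking the change-of-slice and the explicit form \eqref{e1}, and you correctly note that the bookkeeping with $\Omega^K$ is delicate. The paper sidesteps this computation entirely. Its argument is: since $\varphi$ is an almost isometry it preserves $F$-pregeodesics; by the Fermat correspondence \cite[Theorem~4.5]{CJM11} the future-pointing lightlike pregeodesics of $(M,g^K)$ are exactly the curves $(x(s),t(s))$ with $x$ an $F$-pregeodesic and $t$ determined by $\dot t=F(\dot x)$; hence $\psi(x,t)=(\varphi(x),t+f(x))$ sends lightlike pregeodesics to lightlike pregeodesics, and in particular preserves the light cone at every point. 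One then invokes the classical fact (Dajczer--Nomizu \cite{DaNo80} or Kulkarni \cite{Kul79}) that a diffeomorphism of Lorentzian manifolds preserving light cones is conformal. This replaces your anticipated computation with a single citation and is the key idea you are missing.
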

\begin{proof}
The only thing that does not follow immediately from Corollary~\ref{conformal_against_Fermat} is that $\bar{\pi}$ is one-to-one,
i.e., that $\mathcal K$ is the kernel of the map $\pi$ and $\pi$ is surjective. Let us show that $\bar\pi$ is injective. Assume that there exist two $K$-conformal maps $\psi_1$ and $\psi_2$ projecting to the same almost isometry $\varphi$. Then, by Proposition~\ref{diff_fisometries}, there exists a smooth $f:S\rightarrow \R$ such that $\varphi_*(F)=F-{\rm d} f$ and, from Corollary~\ref{conformal_against_Fermat},  $\psi_1(x,t)=(\varphi(x),t+f(x)+c_1)$ and $\psi_2(x,t)=(\varphi(x),t+f(x)+c_2)$. Therefore $\psi_2^{-1}\circ\psi_1\in \mathcal K$ and $[\psi_1]=[\psi_2]$. Finally, we will see that it is surjective. Given an almost isometry $\varphi$, choose a function $f:S\rightarrow \R$ as in Proposition \ref{defequiv} and construct the map $\psi:S\times \R\rightarrow S\times \R$ as $\psi(x,t)=(\varphi(x),t+f(x))$, which preserves $K$. By the Fermat principle \cite[Theorem 4.5]{CJM11}, $\psi$ maps lightlike pregeodesics to lightlike pregeodesics. This means that it preserves the lightlike cone and then it is conformal (see \cite{DaNo80} or \cite{Kul79}).

Finally observe that as $\pi$ is a continuous homomorphism of Lie groups, it is differentiable and the same thing happens with $\bar{\pi}$ (see for instance \cite[Corollary 1.10.9 and Proposition 1.11.8]{DuKo00}).
\end{proof}
\begin{cor}\label{genericity}
Given a manifold $S$, for a \emph{generic} (see Remark~\ref{rem:generic} below)
set of data $(g_0,\omega)$, the conformastationary metric $g=g(g_0,\omega)$
given in \eqref{e1}
on $M=S\times \R$ has discrete $K$-conformal group $\mathrm{Conf}_K(M,g)/{\mathcal K}$.
\end{cor}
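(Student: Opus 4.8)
The plan is to reduce the statement to a genericity result about Randers metrics (equivalently, via Corollary~\ref{hisometry} and Corollary~\ref{conformal_against_Fermat}, about Riemannian metrics) and then to invoke a standard Baire-category/transversality argument. By Proposition~\ref{bijection}, $\mathrm{Conf}_K(M,g)/\mathcal K$ is isomorphic as a Lie group to $\widetilde{\mathrm{Iso}}(S,F)$, where $F=F^K$ is the Fermat metric \eqref{fermatmetric} built from the data $(g_0,\omega)$. By Proposition~\ref{isoextended}, $\widetilde{\mathrm{Iso}}(S,F)$ is a closed subgroup of $\mathrm{Iso}(S,\hat F)$, and $\hat F(v)=\sqrt{h(v,v)}$ with $h$ the Riemannian metric \eqref{metrica-h} (this is exactly the computation in the proof of Corollary~\ref{hisometry}). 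Hence it suffices to show: for a generic choice of $(g_0,\omega)$, the group $\widetilde{\mathrm{Iso}}(S,F)$ is discrete, i.e.\ zero-dimensional; and since it is a Lie group, zero-dimensional is the same as discrete.

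First I would fix the meaning of ``generic'': I would take the space of data $(g_0,\omega)$ to be an open subset (the condition $\|\omega\|_{g_0}<1$ is open) of the Fr\'echet space of pairs consisting of a Riemannian metric and a one-form on $S$, with the $C^\infty$ topology, and ``generic'' means containing a countable intersection of open dense sets (this is what Remark~\ref{rem:generic} is presumably spelling out). The key structural observation is that $\widetilde{\mathrm{Iso}}(S,F)$ has positive dimension if and only if $F$ admits a nontrivial Killing-type infinitesimal symmetry, namely a complete vector field $V$ on $S$ whose flow $\phi_s$ satisfies $(\phi_s)_*F = F - \mathrm d f_s$ for a smooth family $f_s$; infinitesimally, differentiating at $s=0$, this says $\mathcal L_V F$ is an exact one-form (here $\mathcal L_V F$ is the fiberwise derivative of $F$ along the flow, which for a Randers metric $F=\sqrt{h}+\beta$ unpacks into $\mathcal L_V h = 0$ together with $\mathcal L_V\beta = \mathrm d f$). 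So $\widetilde{\mathrm{Iso}}(S,F)$ is discrete precisely when $h$ has no nontrivial Killing field $V$ for which $\mathcal L_V\beta$ is exact — in particular it is discrete whenever $h$ itself has no nontrivial Killing field at all.

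The heart of the argument is then the well-known fact that a generic Riemannian metric on a fixed compact manifold has trivial isometry group; this goes back to Ebin and to Bando--Urakawa, and the same transversality technology yields that a generic metric has no nontrivial Killing fields even on noncompact $S$ (working on an exhaustion by compact sets). Given this, I would argue: the set of $(g_0,\omega)$ for which $h=h(g_0,\omega)$ has no nontrivial Killing field is generic in the data space. Indeed the map $(g_0,\omega)\mapsto h$ is a smooth submersion onto an open set of Riemannian metrics (fixing $\omega$ and varying $g_0$ already surjects locally, since $g_0 = h - \omega\otimes\omega$), so the preimage of a generic set of metrics $h$ is generic in the data. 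For such data, any Killing field of $\hat F$, hence any one-parameter subgroup of $\widetilde{\mathrm{Iso}}(S,F)\subseteq\mathrm{Iso}(S,\hat F)$, must be trivial, so $\widetilde{\mathrm{Iso}}(S,F)$ is discrete, and by Proposition~\ref{bijection} so is $\mathrm{Conf}_K(M,g)/\mathcal K$.

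The main obstacle, and the point requiring the most care, is the genericity input for Riemannian metrics on a possibly noncompact $S$: one must set up the Baire space correctly (the weak Whitney $C^\infty$ topology on metrics is not Baire on an open cone, so one works with a complete metric on the space of metrics, or restricts attention to an exhaustion $S=\bigcup_n S_n$ and takes a countable intersection over $n$ of the ``no Killing field supported near $S_n$'' conditions), and one must verify that the absence-of-Killing-fields condition is both open and dense there — openness is delicate because Killing fields can ``escape to infinity'' in noncompact settings, which is why the hypothesis that $\widetilde{\mathrm{Iso}}(S,F)$ is a Lie group (Proposition~\ref{isoextended}, via \cite{DengHou02}) is essential: it rules out pathological limits and lets one argue dimension-by-dimension. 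I would isolate this as a lemma of the form ``for a generic metric on $S$ the isometry group is discrete'' and cite or adapt Ebin's slice theorem argument, then feed it into the reduction above.
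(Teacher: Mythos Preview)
Your proposal is correct and follows essentially the same route as the paper: reduce via Proposition~\ref{bijection} to $\widetilde{\mathrm{Iso}}(S,F)$, embed this in $\mathrm{Iso}(S,h)$ via Corollary~\ref{hisometry}, invoke the classical fact that a generic Riemannian metric has discrete isometry group, and pull this back through the continuous open surjection $(g_0,\omega)\mapsto h=g_0+\omega\otimes\omega$. Your detour through the infinitesimal Killing-field characterization is correct but unnecessary (the inclusion into $\mathrm{Iso}(S,h)$ already suffices), and your careful discussion of the noncompact case actually goes beyond the paper, which in Remark~\ref{rem:generic} explicitly declines to address those technicalities.
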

\begin{proof}
Given $(g_0,\omega)$, denote by $h$ the Riemannian metric on $S$ given in \eqref{metrica-h}.
By Proposition~\ref{bijection}, $\mathrm{Conf}_K(M,g)/{\mathcal K}\cong\widetilde{\rm Iso}(S,F)$, with $F=\sqrt h+\omega$, see \eqref{fermatmetric}.
By Corollary~\ref{hisometry}, $\widetilde{\rm Iso}(S,F)\subset\mathrm{Iso}(S,h)$, thus $\mathrm{Conf}_K(M,g)/{\mathcal K}$ is discrete if the Riemannian
isometry group $\mathrm{Iso}(S,h)$ is discrete. It is well known, see for instance
\cite{Uhlen72} for the compact case, that, for a $\mathcal C^{2,\alpha}$-generic set of Riemannian metrics $h$, the isometry group $\mathrm{Iso}(S,h)$ is discrete.
On the other hand, the map $(g_0,\omega)\mapsto h= g_0+\omega\otimes\omega$ is clearly continuous
(in the $\mathcal C^0$-topology, for instance), open and surjective. In particular, the inverse image of a dense $G_\delta$ is a dense $G_\delta$.
Hence, $\mathrm{Conf}_K(M,g)/{\mathcal K}$ is discrete for a generic set of pairs $(g_0,\omega)$.
\end{proof}
\begin{rem}\label{rem:generic}
Recall that a subset of a metric space is \emph{generic} if it contains
a dense $G_\delta$, i.e., a countable intersection of dense open subsets. Here, genericity is meant in the space
$\mathrm{Riem}^{2,\alpha}(S)\times\Gamma^{2,\alpha}(TS^*)$, where
$\mathrm{Riem}^{2,\alpha}(S)$ is the set of Riemannian metric tensors of class $\mathcal C^{2,\alpha}$ on $S$, and
$\Gamma^{2,\alpha}(TS^*)$ is the space of $1$-differential forms on $S$ of class $\mathcal C^{2,\alpha}$.
For the sake of precision, when $S$ is not compact, suitable asymptotic assumptions have to be taken into consideration for the correct definition of these spaces, however
we will not get into details of this type of technicalities here.
\end{rem}
\begin{cor}\label{compactLiegroup}
If $S$ is compact, then ${\rm Conf}_K(S\times \R,g)/{\mathcal K}$ and  $\widetilde{\rm Iso}(S,F)$
are compact Lie groups.
\end{cor}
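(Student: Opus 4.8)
The plan is to reduce everything to the compactness of the Riemannian isometry group of a compact manifold, via the chain of identifications already established in the excerpt. First I would invoke Proposition~\ref{bijection} to get the Lie group isomorphism $\mathrm{Conf}_K(S\times\R,g)/\mathcal K\cong\widetilde{\mathrm{Iso}}(S,F)$, so it suffices to prove that $\widetilde{\mathrm{Iso}}(S,F)$ is a compact Lie group. Write $F=\sqrt h+\omega$ as in \eqref{fermatmetric}, with $h$ the Riemannian metric on $S$ from \eqref{metrica-h}; its symmetrized Finsler metric is $\hat F=\sqrt h$, so by Corollary~\ref{hisometry} (or directly by Proposition~\ref{isoextended}) we have the inclusion of groups
\[
\widetilde{\mathrm{Iso}}(S,F)\subseteq\mathrm{Iso}(S,h),
\]
and moreover Proposition~\ref{isoextended} tells us this inclusion is \emph{closed} (the closedness coming from the continuity of the triangular function, equivalently from the fact that pointwise limits of almost isometries are almost isometries).

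Next, since $S$ is compact, the Myers--Steenrod theorem gives that $\mathrm{Iso}(S,h)$ is a \emph{compact} Lie group (a compact Riemannian manifold has compact isometry group; this is the classical result already cited as \cite{MySt39} in the excerpt). A closed subgroup of a compact Lie group is itself a compact Lie group, so $\widetilde{\mathrm{Iso}}(S,F)$ is a compact Lie group. Transporting this back along the isomorphism $\bar\pi$ of Proposition~\ref{bijection}, we conclude that $\mathrm{Conf}_K(S\times\R,g)/\mathcal K$ is a compact Lie group as well, which is the assertion.

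The only genuinely non-routine point is making sure the compact-open topology used throughout (Proposition~\ref{fisometriesLie}) is the one for which ``closed subgroup of a Lie group is a Lie subgroup'' applies; but this is handled by the earlier results, since $\widetilde{\mathrm{Iso}}(S,F)$ is defined to carry the compact-open topology and is shown in Proposition~\ref{isoextended} to be a closed subgroup of $\mathrm{Iso}(S,\hat F)$, which in turn sits closed inside $\mathrm{Iso}(S,h)$. So there is no real obstacle here — the corollary is essentially a packaging of Proposition~\ref{bijection}, Proposition~\ref{isoextended}, Corollary~\ref{hisometry}, and the classical compactness of the isometry group of a compact Riemannian manifold. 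I would keep the written proof to two or three sentences accordingly.
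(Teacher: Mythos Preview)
Your proof is correct and follows essentially the same route as the paper: use Proposition~\ref{isoextended} together with Corollary~\ref{hisometry} to realize $\widetilde{\mathrm{Iso}}(S,F)$ as a closed subgroup of the compact Lie group $\mathrm{Iso}(S,h)$, and then invoke Proposition~\ref{bijection} for the quotient $\mathrm{Conf}_K(S\times\R,g)/\mathcal K$. The paper's written proof is indeed just the two or three sentences you anticipate.
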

\begin{proof}
Observe that $\widetilde{\rm Iso}(S,F)$ is a closed subgroup of the compact Lie group
${\rm Iso}(S,h)$ (see Proposition
\ref{isoextended} and Corollary \ref{hisometry}) and then it is compact. Proposition \ref{bijection}
gives the compactness of ${\rm Conf}_K(S\times \R,g)/{\mathcal K}$.
\end{proof}

\subsection{Characterization of conformal maps}
 Given  $K\in {\rm CTF}(M,g) $, denote by ${\rm CTF}_K(M,g)$ the subset of ${\rm CTF}(M,g)$ consisting of all timelike conformal fields $W$ for which there exists a conformal map
$\psi:(M,g)\rightarrow (M,g)$ satisfying $\psi_*(W)=K$.
\begin{prop}
With the above notation 
\[\mathrm{CTF}_K(M,g)\cong \mathrm{Conf}(M,g)/\mathrm{Conf}_K(M,g).\]
\end{prop}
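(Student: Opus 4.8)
The plan is to exhibit a natural bijection between $\mathrm{CTF}_K(M,g)$ and the left coset space $\mathrm{Conf}(M,g)/\mathrm{Conf}_K(M,g)$. First I would define a map $\Phi:\mathrm{Conf}(M,g)\to\mathrm{CTF}_K(M,g)$ by sending a conformal diffeomorphism $\psi$ to the field $\psi^{-1}_*(K)=(\psi^{-1})_*(K)$. This is well defined: since $\psi$ is conformal it preserves causality, and since a diffeomorphism preserves completeness of vector fields, $\psi^{-1}_*(K)\in\mathrm{CTF}(M,g)$; moreover, by construction $\psi_*\big(\psi^{-1}_*(K)\big)=K$, so $\psi^{-1}_*(K)\in\mathrm{CTF}_K(M,g)$. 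The map $\Phi$ is surjective essentially by the definition of $\mathrm{CTF}_K(M,g)$: if $W\in\mathrm{CTF}_K(M,g)$, there is a conformal $\psi$ with $\psi_*(W)=K$, hence $W=\psi^{-1}_*(K)=\Phi(\psi)$.

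Next I would analyze the fibers of $\Phi$. Two conformal maps $\psi_1,\psi_2$ satisfy $\Phi(\psi_1)=\Phi(\psi_2)$ if and only if $\psi_1^{-1}{}_*(K)=\psi_2^{-1}{}_*(K)$, which is equivalent to $(\psi_2\circ\psi_1^{-1})_*(K)=K$, i.e. $\psi_2\circ\psi_1^{-1}\in\mathrm{Conf}_K(M,g)$, i.e. $\psi_2\in\mathrm{Conf}_K(M,g)\cdot\psi_1$. Thus the fibers of $\Phi$ are exactly the right cosets of $\mathrm{Conf}_K(M,g)$ in $\mathrm{Conf}(M,g)$, and $\Phi$ descends to a bijection $\overline\Phi:\mathrm{Conf}_K(M,g)\backslash\mathrm{Conf}(M,g)\to\mathrm{CTF}_K(M,g)$. (If one prefers the left coset space $\mathrm{Conf}(M,g)/\mathrm{Conf}_K(M,g)$ as written, one uses instead $\psi\mapsto\psi_*(K)$ together with the identity $\psi_1\sim\psi_2\iff\psi_1^{-1}\psi_2\in\mathrm{Conf}_K(M,g)$; I would just match the convention used in the statement and keep the bookkeeping consistent.)

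Finally, to justify the ``$\cong$'' as more than a set bijection, I would promote $\overline\Phi$ to a homeomorphism (and a diffeomorphism of manifolds). Since $\mathrm{Conf}_K(M,g)$ is a $\mathcal C^1$-closed subgroup of the Lie group $\mathrm{Conf}(M,g)$ (proved in the lemma preceding Proposition~\ref{bijection}), the coset space is a smooth manifold and the projection $\mathrm{Conf}(M,g)\to\mathrm{Conf}(M,g)/\mathrm{Conf}_K(M,g)$ is a submersion; on the other side, $\mathrm{CTF}_K(M,g)$ carries the topology induced from the space of vector fields, and the orbit map $\psi\mapsto\psi_*(K)$ is continuous and (being built from the smooth action of the Lie group $\mathrm{Conf}(M,g)$ on vector fields, with closed stabilizer $\mathrm{Conf}_K$) induces a continuous bijection with continuous inverse on the quotient by the standard principal-bundle/orbit argument.

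The main obstacle I expect is the last step: making precise the manifold (or at least topological) structure on $\mathrm{CTF}_K(M,g)$ and verifying that $\overline\Phi$ is a homeomorphism rather than merely a bijection, since $\mathrm{CTF}_K(M,g)$ sits inside an infinite-dimensional space of vector fields. The cleanest route is to observe that $\mathrm{Conf}(M,g)$ acts continuously on the (finite-dimensional, by the conformal structure results invoked earlier) space of complete timelike conformal fields with push-forward, that $\mathrm{CTF}_K(M,g)$ is precisely the orbit of $K$, and that the orbit of a Lie group action through a point with closed stabilizer is an immersed submanifold equivariantly diffeomorphic to the homogeneous space $G/G_K$; everything else in the proof is the elementary coset bookkeeping above.
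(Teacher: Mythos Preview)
Your proposal is correct and is essentially the paper's argument: the paper simply observes that $\mathrm{Conf}(M,g)$ acts on $\mathrm{CF}(M,g)$ by push-forward $(\psi,W)\mapsto\psi_*(W)$, that $\mathrm{Conf}_K(M,g)$ is the stabilizer of $K$, and that $\mathrm{CTF}_K(M,g)$ is exactly the orbit of $K$, whence the orbit--stabilizer identification with $\mathrm{Conf}(M,g)/\mathrm{Conf}_K(M,g)$ (noting that $\mathrm{CF}(M,g)$ is a finite-dimensional vector space, so the orbit is a submanifold). Your explicit coset bookkeeping spells out this bijection by hand, and your final paragraph is precisely the paper's proof; the only cosmetic mismatch is that your first map $\psi\mapsto\psi^{-1}_*(K)$ yields right cosets, whereas the map $\psi\mapsto\psi_*(K)$ matching the paper's left-coset convention is the one you mention parenthetically.
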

\begin{proof}
Observe that the group $\mathrm{Conf}(M,g)$ acts on $\mathrm{CF}(M,g)$ by push-forward: $(\psi,W)\mapsto\psi_*(W)$.
Given $K\in\mathrm{CTF}(M,g)$, the \emph{stabilizer} of $K$ is the subgroup $\mathrm{Conf}_K(M,g)$, and
the set $\mathrm{CTF}_K(M,g)$ is precisely the \emph{orbit} of $K$ by this action and then a submanifold of the vector space $\mathrm{CF}(M,g)$. Therefore, one has an
identification of $\mathrm{CTF}_K(M,g)$ with the quotient manifold  $\mathrm{Conf}(M,g)/\mathrm{Conf}_K(M,g)$.
\end{proof}
\begin{thm}\label{completeConf}
With the above notation, the map 
\[ \pi:\mathrm{Conf}(M,g)/{\mathcal K}\rightarrow 
\mathrm{Conf}(M,g)/\mathrm{Conf}_K(M,g)\cong \mathrm{CTF}_K(M,g),\]
defined in the natural way, is a submersion with fibers diffeomorphic to $\widetilde{\rm Iso}(S,F)$. If $\widetilde{Iso}(S,F)$ is discrete, then
$\pi$ is a covering map.
\end{thm}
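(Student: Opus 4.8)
The plan is to recognize that the map $\pi$ fits into the framework of a quotient of a Lie group action. From the previous results, $\mathrm{Conf}(M,g)$ is a Lie group acting smoothly on $\mathrm{CF}(M,g)$ by push-forward, the orbit of $K$ is $\mathrm{CTF}_K(M,g)$, and the stabilizer is $\mathrm{Conf}_K(M,g)$, so that $\mathrm{CTF}_K(M,g) \cong \mathrm{Conf}(M,g)/\mathrm{Conf}_K(M,g)$ as smooth manifolds. The map $\pi$ in question is obtained from the natural projection $\mathrm{Conf}(M,g)/\mathcal{K} \to \mathrm{Conf}(M,g)/\mathrm{Conf}_K(M,g)$, which is well-defined because $\mathcal{K} \subseteq \mathrm{Conf}_K(M,g)$ (indeed $\mathcal{K}$ is closed and normal in $\mathrm{Conf}_K(M,g)$ by the earlier lemma).

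First I would verify that $\pi$ is a smooth submersion. This is a standard fact about towers of quotients of Lie groups by closed subgroups: if $\mathcal{K} \subseteq \mathrm{Conf}_K(M,g) \subseteq \mathrm{Conf}(M,g)$ are closed subgroups, then both quotient projections $\mathrm{Conf}(M,g) \to \mathrm{Conf}(M,g)/\mathcal{K}$ and $\mathrm{Conf}(M,g) \to \mathrm{Conf}(M,g)/\mathrm{Conf}_K(M,g)$ are submersions, and the induced map between the quotients is a submersion as well (it commutes with the submersions from $\mathrm{Conf}(M,g)$, and a map that becomes a submersion after precomposition with a submersion is itself a submersion). One may cite, e.g., \cite{DuKo00} for the relevant quotient-manifold statements.

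Next I would identify the fibers. The fiber of $\pi$ over the class of $\mathrm{Conf}_K(M,g)$ (the base point corresponding to $K$ itself) is precisely $\mathrm{Conf}_K(M,g)/\mathcal{K}$, which by Proposition~\ref{bijection} is isomorphic as a Lie group to $\widetilde{\rm Iso}(S,F)$. Since $\pi$ is equivariant for the natural transitive $\mathrm{Conf}(M,g)$-action on both $\mathrm{Conf}(M,g)/\mathcal{K}$ and $\mathrm{Conf}(M,g)/\mathrm{Conf}_K(M,g)$, all fibers are diffeomorphic to this one, hence to $\widetilde{\rm Iso}(S,F)$.

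Finally, for the covering-map assertion, suppose $\widetilde{\rm Iso}(S,F)$ is discrete. Then $\pi$ is a submersion with discrete fibers between manifolds of the same (finite) dimension, hence a local diffeomorphism; combined with the fact that it is the projection associated to a free and proper action of the discrete group $\widetilde{\rm Iso}(S,F) \cong \mathrm{Conf}_K(M,g)/\mathcal{K}$ on $\mathrm{Conf}(M,g)/\mathcal{K}$ — freeness and properness being inherited from the Lie group structure and the closedness of $\mathrm{Conf}_K(M,g)$ in $\mathrm{Conf}(M,g)$ — it follows that $\pi$ is a covering map. The main obstacle I anticipate is handling the non-compact case carefully: one must be sure the relevant subgroups are genuinely closed (which is established in the preceding lemmas and in Proposition~\ref{isoextended}) and that the quotients are Hausdorff manifolds, so that the quotient-by-discrete-group argument applies; once these point-set issues are settled, the rest is formal.
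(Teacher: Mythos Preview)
Your proof is correct and follows essentially the same route as the paper: both arguments treat $\pi$ as the natural map between a tower of homogeneous quotients $\mathrm{Conf}(M,g)/\mathcal K \to \mathrm{Conf}(M,g)/\mathrm{Conf}_K(M,g)$, identify the fiber with $\mathrm{Conf}_K(M,g)/\mathcal K\cong\widetilde{\rm Iso}(S,F)$ via Proposition~\ref{bijection}, and for the discrete case pass from ``submersion with discrete fibers'' to ``local diffeomorphism'' and then to ``covering'' via the right action of $\mathrm{Conf}_K(M,g)/\mathcal K$ on $\mathrm{Conf}(M,g)/\mathcal K$. You supply somewhat more detail than the paper (the tower-of-quotients justification for the submersion property, the equivariance argument for identifying all fibers, and the explicit freeness/properness check), but the underlying strategy is the same.
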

\begin{proof}
It is easy to see that $\pi$ is well-defined and smooth by the universal property of quotient maps. It is also immediate to see that it is a submersion and the fibers are diffeomorphic to $\mathrm{Conf}_K(M,g)/{\mathcal K}$, which, by Proposition \ref{bijection}, is diffeomorphic to $\widetilde{\rm Iso}(S,F)$. For the last claim, observe that $\widetilde{\rm Iso}(S,F)\cong  {\rm Conf}_K(M,g)/{\mathcal K}$ acts in $\mathrm{Conf}(M,g)/{\mathcal K}$ by composition to the right. If $\widetilde{\rm Iso}(S,F)$ is discrete then $\pi$ is a local diffeomorphism and as a consequence the action of  $\widetilde{\rm Iso}(S,F)$ is properly discontinuous, which implies that $\pi$ is a covering.
\end{proof}
\begin{cor}\label{compactconf}
Assume that $S$ is compact. Then $\mathrm{Conf}(M,g)/{\mathcal K}$ is compact if and only if $\mathrm{CTF}_K(M,g)$ is compact.
\end{cor}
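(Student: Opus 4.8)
The plan is to read the statement off from the fibration established in Theorem~\ref{completeConf}. Recall that there one obtains a surjective submersion
\[
\pi:\mathrm{Conf}(M,g)/{\mathcal K}\longrightarrow \mathrm{CTF}_K(M,g)\cong \mathrm{Conf}(M,g)/\mathrm{Conf}_K(M,g),
\]
which, being the natural projection of the Lie group $\mathrm{Conf}(M,g)$ associated to the nested closed subgroups ${\mathcal K}\subseteq\mathrm{Conf}_K(M,g)\subseteq\mathrm{Conf}(M,g)$, is a locally trivial fiber bundle whose typical fiber is $\mathrm{Conf}_K(M,g)/{\mathcal K}\cong\widetilde{\rm Iso}(S,F)$ (Proposition~\ref{bijection}). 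Everything will follow by combining this with the compactness of the fiber provided by Corollary~\ref{compactLiegroup}.

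For the implication ``$\mathrm{Conf}(M,g)/{\mathcal K}$ compact $\Rightarrow$ $\mathrm{CTF}_K(M,g)$ compact'' it suffices to note that $\mathrm{CTF}_K(M,g)=\pi\big(\mathrm{Conf}(M,g)/{\mathcal K}\big)$ is the continuous image of a compact space (this direction does not even use that $S$ is compact). For the converse, since $S$ is compact, Corollary~\ref{compactLiegroup} guarantees that the fiber $\widetilde{\rm Iso}(S,F)$ is a compact Lie group, and then I would invoke the elementary fact that a fiber bundle with compact base and compact fiber has compact total space. Concretely: cover the compact set $\mathrm{CTF}_K(M,g)$ by finitely many open sets $U_1,\dots,U_N$ over which $\pi$ is trivial, choose compact sets $C_i\subseteq U_i$ with $\bigcup_{i=1}^N C_i=\mathrm{CTF}_K(M,g)$ (possible since $\mathrm{CTF}_K(M,g)$ is a locally compact Hausdorff manifold), and observe that each $\pi^{-1}(C_i)$ is homeomorphic to $C_i\times\widetilde{\rm Iso}(S,F)$, hence compact; therefore $\mathrm{Conf}(M,g)/{\mathcal K}=\bigcup_{i=1}^N\pi^{-1}(C_i)$ is compact, being a finite union of compact sets. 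Equivalently, one may observe that a bundle projection with compact fiber is a proper map, and $\mathrm{Conf}(M,g)/{\mathcal K}$ is the $\pi$-preimage of the (by hypothesis compact) set $\mathrm{CTF}_K(M,g)$.

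This is essentially a bookkeeping argument once Theorem~\ref{completeConf} and Corollary~\ref{compactLiegroup} are in place, so I do not expect a serious obstacle. The two points that deserve a word of care are: the genuine local triviality of $\pi$, which is the standard fact that $G/{\mathcal K}\to G/H$ is a locally trivial bundle for nested closed subgroups ${\mathcal K}\subseteq H$ of a Lie group $G$; and the compactness of the fiber $\widetilde{\rm Iso}(S,F)$, which is precisely where the hypothesis that $S$ be compact enters.
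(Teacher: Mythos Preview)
Your proof is correct and follows essentially the same approach as the paper's. The paper invokes Theorem~\ref{completeConf} and Corollary~\ref{compactLiegroup}, then appeals to the fact that the total space of a submersion with compact fibers is compact if and only if the base is; you supply the extra detail that $\pi$ is a locally trivial bundle (from the nested closed subgroups) and spell out the finite-cover/properness argument, which is precisely the content of that fact in this Lie-theoretic setting.
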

\begin{proof}
It follows from Theorem \ref{completeConf} taking into account that if $S$ is compact, then $\widetilde{\rm Iso}(S,F)$ is also compact and  the total space of a submersion with compact fibers is compact if and only if the base is compact.
\end{proof}
 Let us define 
\[\mathrm{Conf}_{K,W}(M,g)=\{\psi\in \mathrm{Conf}(M,g): \psi_*(W)=K\},\]
which can be identified with $\mathrm{Conf}_{K}(M,g)$ by fixing one element of $\mathrm{Conf}_{K,W}(M,g)$. 
\begin{thm}\label{finalTh}
Choose $F_1\in i(K)$ and $F_2\in i(W)$. Then every map $\psi\in\mathrm{Conf}_{K,W}(M,g)$ can be expressed as $\psi(x,t)=(\varphi(x),t+f(x))$, where $\varphi:(S,F_1)\rightarrow (S,F_2)$ is an almost isometry and $f:S\rightarrow\R$ satisfies \eqref{eq:defquasiisom}. Moreover, there is a smooth surjective map $\pi:\mathrm{Conf}_{K,W}(M,g)\rightarrow \widetilde{\rm Iso}(S,i(K),i(W))$ defined as $\pi(\psi)=\varphi$ and if $\pi(\psi_1)=\pi(\psi_2)$, then
$\psi_1=\psi_2\circ T_c$, for some $c\in\R$, where $T_c\in {\mathcal K}$.
\end{thm}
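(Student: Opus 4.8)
The plan is to obtain the whole statement by assembling Theorem~\ref{fundamental}, Lemma~\ref{Fermatclass} and Proposition~\ref{bijection}, exploiting that $\mathrm{Conf}_{K,W}(M,g)$ becomes $\mathrm{Conf}_K(M,g)$ after translation by one fixed element. First I would settle the expression for $\psi$. Given $\psi\in\mathrm{Conf}_{K,W}(M,g)$ (there is nothing to prove if this set is empty), Theorem~\ref{fundamental} produces conformastationary decompositions of $M$ adapted to $K$ and to $W$ in which $\psi(x,t)=(\varphi(x),t+f(x))$, with $f$ smooth and $\varphi$ an almost isometry of the two associated Fermat metrics. By Lemma~\ref{Fermatclass} there is on each side a unique decomposition whose Fermat metric is the prescribed representative $F_1\in i(K)$, resp.\ $F_2\in i(W)$; replacing the two decompositions by these, and recalling (as observed just before the definition of $\widetilde{\rm Iso}(S,[F_1],[F_2])$) that the almost isometry property is unaffected by the choice of representative inside a class, we get $\varphi\in\widetilde{\rm Iso}(S,i(K),i(W))$. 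That the accompanying $f$ satisfies \eqref{eq:defquasiisom} for $d_{F_1}$ and $d_{F_2}$ is then Proposition~\ref{diff_fisometries} (which gives $\varphi_*(F_1)=F_2-\df f$) combined with the length-to-distance computation of Proposition~\ref{defequiv}, or equivalently Corollary~\ref{conformal_against_Fermat} read in the chosen splittings.

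Next I would handle the map $\pi(\psi)=\varphi$. The cleanest route is to reduce to Proposition~\ref{bijection}: fix $\psi_0\in\mathrm{Conf}_{K,W}(M,g)$ and set $\varphi_0=\pi(\psi_0)$. Translation by $\psi_0$ identifies $\mathrm{Conf}_{K,W}(M,g)$ with the Lie group $\mathrm{Conf}_K(M,g)$, and composition with $\varphi_0$ identifies $\widetilde{\rm Iso}(S,i(K),i(W))$ with $\widetilde{\rm Iso}(S,F_1)$; under these two identifications $\pi$ becomes exactly the smooth, surjective Lie group homomorphism $\pi_K$ of Proposition~\ref{bijection}, so $\pi$ is smooth and surjective. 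A self-contained proof of surjectivity is also available: given $\varphi\in\widetilde{\rm Iso}(S,i(K),i(W))$, choose $f$ with $\varphi_*(F_1)=F_2-\df f$ via Proposition~\ref{diff_fisometries} (or Proposition~\ref{defequiv}), put $\psi(x,t)=(\varphi(x),t+f(x))$, observe that $\psi$ preserves the relevant coordinate field so that the defining condition of $\mathrm{Conf}_{K,W}(M,g)$ holds, and invoke the Fermat principle \cite[Theorem~4.5]{CJM11} together with the fact that a diffeomorphism preserving the lightlike cone is conformal (exactly as in the surjectivity part of Proposition~\ref{bijection}) to conclude $\psi\in\mathrm{Conf}_{K,W}(M,g)$ with $\pi(\psi)=\varphi$.

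Finally, for the fibers of $\pi$, assume $\pi(\psi_1)=\pi(\psi_2)=\varphi$. In the chosen splittings $\psi_i(x,t)=(\varphi(x),t+f_i(x))$, and both $f_1$ and $f_2$ satisfy \eqref{eq:defquasiisom} for $d_{F_1},d_{F_2}$ with the same $\varphi$; by the remark after Proposition~\ref{defequiv} (uniqueness of such a function up to an additive constant) one gets $f_1=f_2+c$ for some $c\in\R$, whence $\psi_1=\psi_2\circ T_c$ with $T_c(x,t)=(x,t+c)\in{\mathcal K}$. Through the reduction above this is just the transported form of the identity $\ker\pi_K={\mathcal K}$ from Proposition~\ref{bijection}.

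The only genuinely delicate point in this plan is keeping track of which Fermat metric and which conformastationary decomposition sits on each side of $\psi$, so that the representatives end up precisely as $F_1\in i(K)$ on the source and $F_2\in i(W)$ on the target, as in the statement; in the self-contained route to surjectivity, the one step where new input is used rather than merely reassembled from earlier results is the verification that the constructed map $\psi(x,t)=(\varphi(x),t+f(x))$ is conformal, which rests on the Fermat/lightcone characterization of lightlike pregeodesics.
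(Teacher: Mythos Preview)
Your proposal is correct and follows essentially the same approach as the paper, which merely indicates that the result follows from Theorem~\ref{fundamental} together with Lemma~\ref{Fermatclass}, arguing as in Proposition~\ref{bijection}. You have faithfully expanded this sketch---including the reduction to Proposition~\ref{bijection} via a fixed $\psi_0$ and the uniqueness-up-to-constant argument for the fibers---and you even flag the one bookkeeping subtlety (matching the source and target representatives to $F_1\in i(K)$ and $F_2\in i(W)$) that the paper leaves implicit.
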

\begin{proof}
The proof follows easily using Theorem \ref{fundamental} taking into account Lemma \ref{Fermatclass} and arguing as
in Proposition~\ref{bijection}. 
%Indeed consider a decomposition $S\times \R$ of $(M,g)$ as a conformastationary spacetime associated to $K$ such that $F_1$ is the corresponding Fermat metric (see \cite{}). Then Theorem \ref{fundamental} says that $\psi(x,t)=(\varphi(x),t+\tilde{f}(x))$, wher 
\end{proof}

\section*{Acknowledgments}

 The authors warmly acknowledge  Professors E. Garc\'ia-R\'io, V. Matveev
and M. S\'anchez for helpful conversations on the topics of this
paper.

\end{document}